\newtheorem{theorem}{Theorem}[section]
\newtheorem{proposition}[theorem]{Proposition}
\newtheorem{corollary}[theorem]{Corollary}
\newtheorem{lemma}[theorem]{Lemma}
\newtheorem{remarks}[theorem]{Remarks}
\newtheorem{assumption}[theorem]{Assumption}
\DeclareMathOperator\dist{dist}
\begin{document}

\title{Large Deviation Principle for Reflected Poisson driven $SDE$s in Epidemic Models.}

\author{
{Etienne Pardoux}\footnote{{Aix--Marseille Univ, CNRS, Centrale Marseille, I2M, 
Marseille, France; etienne.pardoux@univ-amu.fr; brice.samegni-kepgnou@univ-amu.fr.}}
\and
{Brice Samegni-Kepgnou}\setcounter{footnote}{6}$^\ast$
}

\maketitle

\begin{abstract}
We establish a large deviation principle for a reflected Poisson driven $SDE$. Our motivation is to  study in a forthcoming paper the problem of exit of such a process from the basin of attraction of a locally stable equilibrium associated with its law of large numbers. Two examples are described in which we verify  the assumptions that we make to establish  the large deviation principle. 
\end{abstract}

\vskip 3mm
\noindent{\bf AMS subject classification: } 60F10, 60H10, 92C60.

\noindent{\bf Keywords: } Poisson process, Law of large numbers, Large deviation principle.
\vskip 3mm

\section{Introduction}

 Consider a compartmental model for an infectious disease which takes the form of the Poisson driven $SDE$ 
\begin{equation}\label{PoissonSDE}
Z^{N,z}(t):=\frac{[Nz]}{N}+\frac{1}{N}\sum_{j=1}^{k}h_{j}P_{j}\Big(N\int_{0}^{t} \beta_{j}(Z^{N}(s)) ds \Big), 
\end{equation}
where $N$ is the total size of the population that is assumed to be constant, $k$ is the number of possible types of transitions for a given individual, the $d$ components of $Z^{N,z}(t)$ denote the proportions of individuals in the $d$ distincts compartments at time $t$, $P_{j}$ $(j=1,...,k)$ are  mutually independent standard Poisson processes, $h_{j}\in\{-1,0,1\}^{d}$ $(j=1,...,k)$ are the distinct jump directions with rates $\beta_{j}(.)$ which  are $\mathbb{R}_{+}-$valued, and such that the solution of \eqref{PoissonSDE} remains in the set
\begin{equation*}
A:=\Big\{z\in\mathbb{R}_{+}^{d}: \sum_{i=1}^{d}z_{i}\leq1\Big\}.
\end{equation*}
Under appropriate assumptions, 
as $N\to\infty$, $Z^{N,z}(t)\to Y^z(t)$ in probability, locally uniformly in $t$, where $Y^z(t)$ solves the ODE
\begin{equation}\label{ODE}
\frac{dY^z}{dt}(t)=b(Y^z(t)),\ Y^z(0)=z,
\end{equation}
with $b(z)=\sum_{j=1}^k\beta_j(z)h_j$.
We have established in \cite{Sam2017} a large deviation principle for the solution of such an $SDE$. 

Our aim in this paper is to establish a large deviation principle for a similar Poisson driven $SDE$, which is reflected in the interior of some subset $O$ of $A$. Indeed, let us look at the literature on the exit problem from a domain $O$ of a dynamical system with a small Brownian perturbation. We note that most authors assume that the dynamical system crosses $\partial O$ non-tangentially i.e. $\langle n(z), b(z)\rangle<0$, for all $z\in\partial O$ where $n(z)$ is the unit outward normal to $\partial O$ at $z$. On the other hand,  Day in \cite{day1990large} considers the case of a "characteristic boundary", i.e. where the dynamical system which begins at a point on the boundary remains there, in other words  $\langle n(z), b(z)\rangle=0$ for all $z\in \partial O$. For instance, $O$ might be the basin of attraction of a locally stable equilibrium of the ODE \eqref{ODE}. In order to study the limit as $N\to\infty$ of the exit law of a small Brownian perturbation of the solution of \eqref{ODE} from $O$, he needs first to study the large deviation of a reflected $SDE$.
 
 Similarly, we want to study in a forthcoming publication the large deviation of the exit law of a solution of \eqref{PoissonSDE} through a characteristic boundary  of a stable domain of its law of large numbers limits $Y^z(t)$. 
 Our motivation is the study of the most probable trajectory of $Z^{N,z}(t)$ when exiting the basin of attraction of an endemic situation, which is a locally stable equilibrium of the deterministic model \eqref{ODE}.

The proof of the large deviation principle for the reflecting Poisson driven $SDE$ follows the same steps as the proof of the large deviation principle for the original Poisson driven $SDE$ defined by \eqref{PoissonSDE}, but the arguments are modified when necessary. The paper is organized as follows. The reflected Poisson driven SDE is defined in section \ref{sec2}. The law of large numbers for that reflected SDE is established in section \ref{weakLLN}. Some preliminary results towards the large deviation are established in section \ref{sec4}.
The lower bound is established in section \ref{Seclower}, and the upper bound in section \ref{Secupper}. Finally, in section \ref{sec7}, we show that our assumptions are satisfied in two examples of epidemics models which we have in mind among the possible applications of our results.

\section{The reflected Poisson driven SDE}\label{sec2}
We will need to consider in this paper the sets $\bar{O}^{N}=\bar{O}\cap A^{(N)}$ where $$A^{(N)}=\Big\{z\in A:Nz\in\mathbb{Z}^{d}_{+}\Big\}.$$
For any $z\in\bar{O}$, we let $[N z]=([N z_1],...,[N z_d])$ where $[a]$ denotes the integer part of the real number $a$. And we define the vector $z^{N}$ by
\begin{equation*}
z^{N}=
\begin{cases}
      \frac{[N z]}{N}& \text{if}~~~ \frac{[N z]}{N}\in\bar{O}, \\
     \underset{y\in\bar{O}^{(N)}}{\arg\inf}|y-z|& \text{otherwise}.
\end{cases}
\end{equation*}
 We now define  the $d$--dimensional reflected process $\tilde{Z}^{N,z}_{t}$ by
\begin{align}\label{reflexionsde2}
\tilde{Z}^{N}(t)=\tilde{Z}^{N,z}(t)&:=z^{N}+\frac{1}{N}\sum_{j=1}^{k}h_{j}Q_{t}^{N,j}-\frac{1}{N}\sum_{j=1}^{k}h_{j}\int_{0}^{t}\mathbf{1}_{\{\tilde{Z}^{N}(s-)+\frac{h_{j}}{N}\not\in\bar{O}\}}dQ_{s}^{N,j} \nonumber \\
&:=z^{N}+\frac{1}{N}\sum_{j=1}^{k}h_{j}\int_{0}^{t}\mathbf{1}_{\{\tilde{Z}^{N}(s-)+\frac{h_{j}}{N}\in\bar{O}\}}dQ_{s}^{N,j}, 
\end{align}
where the $Q_{t}^{N,j}$'s are defined by
\begin{equation}\label{Qaux}
Q_{t}^{N,j}=P_{j}\Big(N\int_{0}^{t} \beta_{j}(\tilde{Z}^{N}(s)) ds \Big).
\end{equation}
We remark that for any $t>0$, $\tilde{Z}^{N}(t)\in\bar{O}^{(N)}$ and then a solution of \eqref{reflexionsde2} on the time interval $[0,T]$  belongs $a.s.$ to $D_{T,\bar{O}}$, which is the set of functions from $[0,T]$ into $\bar{O}$ and which are right continuous and have left limits. The aim of this paper is to show that the solution of the above reflecting Poisson driven $SDE$ obeys the same large deviation principle with the same "good" rate function as the solution of \eqref{PoissonSDE}. In the sequel, we denote $\mathbb{P}^{N}_{z}$ the probability measure on $D_{T,\bar{O}}$ such that the process $\tilde{Z}^{N}$ has as initial condition $\tilde{Z}^{N}(0)=z^{N}$, where $z^{N}$ is chosen as we specified above.

 The main difficulty to establish our result is that  some of the rates $\beta_{j}$ vanish on parts of the boundary of the set $O$. To solve this problem, we make the following assumptions:

\begin{assumption}\label{assumpinq}
\begin{enumerate}
\item $\bar{O}$ is compact and there exists a point $z_{0}$ in the interior of $O$ such that each segment joining $z_{0}$ and any $z\in\partial O$ does not touch any other point of the boundary $\partial O$.  \label{assump21}
\item For each $a>0$ small enough and $z\in\bar{O}$, denoting $z^{a}=z+a(z_{0}-z)$, we assume that there exist two positive constants $c_{1}$ and $c_{2}$ such that  \label{assump22}
 \begin{align*}
   |z-z^{a}| &\leq c_{1} a,  \\
   \dist(z^{a},\partial O) &\geq c_{2} a.  
\end{align*}
 \item The rate functions $\beta_{j}$ are Lipschitz continuous with the Lipschitz constant $C$.  \label{assump23}
 \item There exist two constants $\lambda_{1}$ and $\lambda_{2}$ such that, whenever $z\in \bar{O}$ is such that $\beta_{j}(z)<\lambda_{1}$, $\beta_{j}(z^{a})>\beta_{j}(z)$ for all $a\in]0, \lambda_{2}[$ . \label{assump24}
 \item There exist constants $\nu\in]0,1/2[$ and $a_0>0$ such that  \label{assump25}
$C_a\geq\exp\{-a^{-\nu}\}$ for all $a<a_0$,
 where
 \begin{equation*}
 C_{a}=\inf_{j}\inf_{z:\dist(z,\partial O)\geq c_{2}a}\beta_{j}(z).
 \end{equation*}
 \end{enumerate}
\end{assumption}
We define  $\sigma=\underset{1\leq j\leq k}{\sup}\quad\underset{z\in A}{\sup}\beta_{j}(z)$.
\section{The weak law of large numbers}\label{weakLLN}

 We first assume that $\partial O$ is smooth enough so that the following assumption is satisfied. 
\begin{assumption}\label{assump3}
There exists a function $u\in C^{1}_{b}(\bar{O})$ which satisfies the following assumptions:
\begin{enumerate}
  \item $O=A\cap\{z\in\bar{O}: u(z)>0\}$, $\partial O=A\cap\{z\in\bar{O}: u(z)=0\}$. \label{assump31}
  \item $\nabla u(z)\neq0$ for all $z\in\partial O$.  \label{assump32}
  \item There exist $C_{1}$, $C_2>0$ such that $\min\{C_{1}\dist(z,\partial O), C_2\}\leq u(z)$, for all $z\in\bar{O}$ \label{assump33}
  \item $\big<b(z), \nabla u(z)\big>\geq0$ for all $z\in\bar{O}$, with $b(z)=\underset{j=1}{\overset{k}{\sum}}\beta_j(z)h_j$. \label{assump34}
  \item There exits $\rho>0$ such that $\big<-g_{N}(z), \nabla u(z)\big>\geq\rho\underset{j=1}{\overset{k}{\sum}}\mathbf{1}_{\big\{ z+\frac{h_{j}}{N}\not\in\bar{O}\big\}}$ for all $z\in\bar{O}$, \label{assump35}
\end{enumerate}
where 
\begin{equation*}
g_{N}(z)=\sum_{j=1}^{k}\mathbf{1}_{\big\{ z+\frac{h_{j}}{N}\not\in\bar{O}\big\}}h_{j}\beta_{j}(z).
\end{equation*}
\end{assumption}

 In the remaining of this section we assume that both Assumption \ref{assumpinq} and Assumption \ref{assump3} are in force.
\begin{lemma}\label{le:1}
Let $(\widetilde{\mathcal{M}}^{N}(t))_{t\geq 0}$ be the process defined for all $t\geq0$ by
\begin{equation*}
\widetilde{\mathcal{M}}^{N}(t)=\sum_{j=1}^{k}\int_{0}^{t}h_{j}\mathbf{1}_{\big\{ \tilde{Z}^{N}(s^{-})+\frac{h_{j}}{N}\in\bar{O}\big\}}d\widetilde{Q}^{N,j}_{s},
\end{equation*}
where 
\begin{equation*}
\widetilde{Q}^{N,j}_{s}=\frac{1}{N}Q^{N,j}_{s}-\int_{0}^{s}\beta_{j}(\tilde{Z}^{N}(r))dr.
\end{equation*}
Then  $\widetilde{\mathcal{M}}^{N}(.)$ is a square integrable martingale and for all $T>0$,
\begin{equation*}
\sup_{0\leq t\leq T}|\widetilde{\mathcal{M}}^{N}(t)|\underset{N \to +\infty}{\overset{\mathbb{P}}{\longrightarrow}}0.
\end{equation*}

\end{lemma}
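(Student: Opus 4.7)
The plan is to exploit the compensated Poisson structure of $\widetilde{Q}^{N,j}$, which gives $\widetilde{\mathcal{M}}^N$ a predictable quadratic variation of order $1/N$, and then to apply Doob's $L^2$ maximal inequality. First I would work on the natural filtration $\mathcal{F}_t=\sigma(\tilde{Z}^N(s),\,0\leq s\leq t)$. With respect to $\mathcal{F}_t$, each $Q^{N,j}$ is a counting process with intensity $N\beta_j(\tilde{Z}^N(s))$, so $\widetilde{Q}^{N,j}$ is a purely discontinuous square integrable local martingale with jumps of size $1/N$ and predictable quadratic variation
\begin{equation*}
\langle \widetilde{Q}^{N,j} \rangle_t = \int_0^t \frac{\beta_j(\tilde{Z}^N(s))}{N}\,ds.
\end{equation*}
The integrand $\mathbf{1}_{\{\tilde{Z}^N(s^-)+h_j/N\in\bar{O}\}}$ is predictable (left continuous in $s$) and bounded by $1$, so each vector-valued stochastic integral $h_j\int_0^{\cdot}\mathbf{1}_{\{\cdots\}}\,d\widetilde{Q}^{N,j}_s$ is a local martingale, and $\widetilde{\mathcal{M}}^N$ is their finite sum.

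For the $L^2$ bound, I would use that for $j\neq j'$ the processes $Q^{N,j}$ and $Q^{N,j'}$ almost surely never jump simultaneously (as independent Poisson processes composed with absolutely continuous time changes), hence $\widetilde{Q}^{N,j}$ and $\widetilde{Q}^{N,j'}$ are pairwise orthogonal. Computing the second moment componentwise,
\begin{equation*}
\mathbb{E}\bigl[|\widetilde{\mathcal{M}}^N(T)|^2\bigr] = \sum_{j=1}^k |h_j|^2 \, \mathbb{E}\!\left[\int_0^T \mathbf{1}_{\{\tilde{Z}^N(s^-)+h_j/N\in\bar{O}\}}\frac{\beta_j(\tilde{Z}^N(s))}{N}\,ds\right] \leq \frac{kd\sigma T}{N},
\end{equation*}
using $|h_j|^2\leq d$ (since $h_j\in\{-1,0,1\}^d$) and the global bound $\beta_j\leq\sigma$ on $A$. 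This confirms in particular that $\widetilde{\mathcal{M}}^N$ is a genuine square integrable martingale.

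Finally, Doob's $L^2$ maximal inequality applied componentwise yields
\begin{equation*}
\mathbb{E}\!\left[\sup_{0\leq t\leq T}|\widetilde{\mathcal{M}}^N(t)|^2\right] \leq 4\,\mathbb{E}\bigl[|\widetilde{\mathcal{M}}^N(T)|^2\bigr] \leq \frac{4kd\sigma T}{N},
\end{equation*}
which tends to $0$ as $N\to\infty$, giving convergence in $L^2$ and hence in probability. The argument is essentially routine; the only points requiring a touch of care are the predictability of the indicator (handled by evaluating at $\tilde{Z}^N(s^-)$) and the almost sure disjointness of the jump times of the $Q^{N,j}$, neither of which is a serious obstacle.
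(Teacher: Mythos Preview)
Your proof is correct and follows essentially the same route as the paper: identify $\widetilde{\mathcal{M}}^N$ as a sum of stochastic integrals of bounded predictable processes against the compensated martingales $\widetilde{Q}^{N,j}$, compute the predictable quadratic variation (using orthogonality of the $\widetilde{Q}^{N,j}$) to get the bound $kd\sigma T/N$, and conclude via Doob's $L^2$ inequality. You simply spell out a couple of points---predictability of the indicator and disjointness of jump times---that the paper leaves implicit.
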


\begin{proof}
 $\widetilde{\mathcal{M}}^{N}(t)$ is a square integrable martingale since it is a sum of $k$ stochastic integrals of bounded predictable processes with respect to square integrable martingales. We deduce from Doob's inequality, see e.g. \cite{revuz2013} ,that
\begin{align*}
\mathbb{E}\Big(\sup_{0\leq t\leq T}|\widetilde{\mathcal{M}}^{N}(t)|^{2}\Big)&\leq 4 \mathbb{E}\Big(|\widetilde{\mathcal{M}}^{N}(T)|^{2}\Big)\\
&= 4 \mathbb{E}\Big(\big<\widetilde{\mathcal{M}}^{N}\big>_{T}\Big),
\end{align*}
where $\big<\mathcal{M}^{N}\big>_t$ is the increasing predictable process such that $|\mathcal{M}^{N}(t)|^{2}-\big<\mathcal{M}^{N}\big>_t$ is a martingale. We have
\begin{align*}
   \big<\widetilde{\mathcal{M}}^{N}\big>_{T} &=\int_{0}^{T}\sum_{j=1}^{k}|h_{j}|^{2}\mathbf{1}_{\big\{ \tilde{Z}^{N}(t^{-})+\frac{h_{j}}{N}\in\bar{O}\big\}}\frac{1}{N^{2}}N\beta_{j}(\tilde{Z}^{N}(t))dt   \\
    &\leq \frac{k d \sigma T}{N}. 
\end{align*}
Thus
\begin{equation*}
\mathbb{E}\Big(\sup_{0\leq t\leq T}|\widetilde{\mathcal{M}}^{N}(t)|^{2}\Big)\leq \frac{k d \sigma T}{N}\to0,\quad\text{as}\quad N\to\infty.
\end{equation*}
The result follows.
\end{proof}

 We note that equation \eqref{reflexionsde2} can be re-written as
\begin{equation}\label{reflexion2}
\tilde{Z}^{N}(t)=z^{N}+\int_{0}^{t}b(\tilde{Z}^{N}(s))ds-\int_{0}^{t}g_{N}(\tilde{Z}^{N}(s))ds+\widetilde{\mathcal{M}}^{N}(t),
\end{equation}
where for all $z\in A$, $b(z)=\underset{j=1}{\overset{k}{\sum}}\beta_{j}(z)h_{j}$.
\begin{lemma}\label{le:2}
Let $(\widetilde{\mathcal{X}}^{N}(t))_{t\geq0}$ be the process defined for all $t\geq0$ by
\begin{equation*}
\widetilde{\mathcal{X}}^{N}(t)=\int_{0}^{t}\sum_{j=1}^{k}\mathbf{1}_{\big\{ \tilde{Z}^{N}(s)+\frac{h_{j}}{N}\not\in\bar{O}\big\}}ds.
\end{equation*}
Then for all $T>0$,
\begin{equation*}
\widetilde{\mathcal{X}}^{N}(T)\underset{N \to +\infty}{\overset{\mathbb{P}}{\longrightarrow}}0.
\end{equation*}
\end{lemma}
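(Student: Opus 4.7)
The plan is to apply an Itô-type change-of-variables identity to $u(\tilde Z^N(\cdot))$, where $u\in C^1_b(\bar O)$ is the test function supplied by Assumption~\ref{assump3}, and to read off an upper bound for $\widetilde{\mathcal X}^N(T)$ from the sign conditions in that assumption.

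First I expand $u(\tilde Z^N(T))-u(z^N)$ as a sum over jump times. Using the first-order expansion $u(z+h_j/N)-u(z)=N^{-1}\langle\nabla u(z),h_j\rangle+o(1/N)$ (uniform in $z\in\bar O$ by uniform continuity of $\nabla u$), splitting each Poisson integral into its compensator $N\beta_j(\tilde Z^N(s))\,ds$ plus the compensated martingale built from $\widetilde Q^{N,j}$, and recognizing that $b-g_N=\sum_j \mathbf{1}_{\{z+h_j/N\in\bar O\}}\beta_j h_j$, I obtain
\begin{equation*}
u(\tilde Z^N(T))-u(z^N) = \int_0^T \bigl\langle \nabla u(\tilde Z^N(s)),\,b(\tilde Z^N(s))-g_N(\tilde Z^N(s))\bigr\rangle\,ds + \mathcal N^N_T + R_N,
\end{equation*}
where $\mathcal N^N$ is a square-integrable martingale with $\mathbb E[\langle \mathcal N^N\rangle_T]=O(1/N)$ (each integrand is of size $O(1/N)$, by the same computation as in Lemma~\ref{le:1}), and $R_N\to 0$ in $L^1$, since the $O(N)$ jumps in $[0,T]$ each contribute an $o(1/N)$ Taylor remainder.

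Second, the two pointwise inequalities $\langle\nabla u(z),b(z)\rangle\geq 0$ (Assumption~\ref{assump3}(\ref{assump34})) and $\langle\nabla u(z),-g_N(z)\rangle\geq\rho\sum_j\mathbf{1}_{\{z+h_j/N\notin\bar O\}}$ (Assumption~\ref{assump3}(\ref{assump35})), summed and integrated over $[0,T]$, yield
\begin{equation*}
\rho\,\widetilde{\mathcal X}^N(T) \leq \int_0^T \langle\nabla u,b-g_N\rangle\,ds = u(\tilde Z^N(T))-u(z^N)-\mathcal N^N_T-R_N.
\end{equation*}
Since $u$ is bounded on $\bar O$, Doob's inequality applied to $\mathcal N^N$ (so that $\sup_{t\leq T}|\mathcal N^N_t|\to 0$ in $L^2$) together with $R_N\to 0$ in $L^1$ deliver a uniform $L^1$ bound on $\widetilde{\mathcal X}^N(T)$.

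To upgrade $L^1$-boundedness to convergence in probability to zero, I will combine the above inequality with tightness of $(\tilde Z^N)$ in $D_{T,\bar O}$ (immediate from compactness of $\bar O$ and boundedness of all rates $\beta_j$) and identify every subsequential weak limit as the unique ODE solution $\dot Y=b(Y)$, $Y(0)=z$: because $|g_N|$ is dominated pointwise by a constant multiple of $\sum_j\mathbf{1}_{\{z+h_j/N\notin\bar O\}}$, the $L^1$ control above forces the ``reflection drift'' $\int_0^\cdot g_N(\tilde Z^N)\,ds$ to vanish in any limit. Assumption~\ref{assump3}(\ref{assump34}) then yields $u(Y(t))\geq u(z)>0$ whenever $z$ is interior, keeping $Y$ at positive distance from $\partial O$, which guarantees that $\mathbf{1}_{\{\tilde Z^N(s)+h_j/N\notin\bar O\}}$ vanishes uniformly in $s\in[0,T]$ with probability tending to one. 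The main obstacle is this final tightness-plus-identification step, executed without circularity by using only the inequality above as an a priori input.
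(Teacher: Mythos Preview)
Your It\^o expansion and the resulting inequality
\[
\rho\,\widetilde{\mathcal X}^N(t)\;\le\; u(\tilde Z^N(t))-u(z^N)-\mathcal N^N_t-R_N(t)
\]
are correct and match exactly what the paper does (its inequality \eqref{ineqle1} combined with Assumption~\ref{assump3}\ref{assump35}). The control of $\mathcal N^N$ and of the Taylor remainder is also the same.

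The gap is in your last paragraph. From the inequality you only obtain that $\widetilde{\mathcal X}^N(T)$ is \emph{bounded} (by $2\|u\|_\infty/\rho$ plus vanishing terms), not that it tends to zero. Your plan to close the gap by tightness plus identification of the limit is circular: to identify any subsequential limit $Y$ as the ODE solution $\dot Y=b(Y)$ you must show that the reflection drift $\int_0^\cdot g_N(\tilde Z^N)\,ds$ vanishes in the limit, and ``$L^1$ control'' (i.e.\ uniform boundedness) of $\widetilde{\mathcal X}^N(T)$ does \emph{not} force that---a bounded sequence need not converge to zero. Moreover, the law of large numbers (Theorem~\ref{LLN}) is proved \emph{using} the present lemma, so you cannot invoke it here. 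Finally, the argument ``$u(Y(t))\ge u(z)>0$ whenever $z$ is interior'' already presupposes that $Y$ solves the ODE, and says nothing when $z\in\partial O$.

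The missing idea, which the paper supplies, is a direct stopping-time argument that uses your inequality at intermediate times rather than only at $T$. Set $T^{N,1}_\delta=\inf\{t:\widetilde{\mathcal X}^N(t)\ge 2\delta/3\}\wedge T$. For $t\ge T^{N,1}_\delta$ your inequality gives
\[
u(\tilde Z^N(t))\;\ge\;\rho\,\tfrac{2\delta}{3}+\mathcal N^N_t+R_N(t),
\]
so with probability tending to one $u(\tilde Z^N(t))\ge \rho\delta/3$ for all $t\in[T^{N,1}_\delta,T]$. But on the set $\tilde{\mathcal B}^N=\{z:\sum_j\mathbf 1_{\{z+h_j/N\notin\bar O\}}>0\}$ one has $\dist(z,\partial O)=O(1/N)$ and hence $u(z)=O(1/N)$ (since $u=0$ on $\partial O$ and $\nabla u$ is bounded). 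For $N$ large these are incompatible, so after $T^{N,1}_\delta$ the process does not re-enter $\tilde{\mathcal B}^N$, $\widetilde{\mathcal X}^N$ stops increasing, and $\widetilde{\mathcal X}^N(T)\le 2\delta/3<\delta$ with probability tending to one. This closes the argument without any appeal to tightness or to the law of large numbers.
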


\begin{proof}
 Let $u$ be the function appearing in the Assumption \ref{assump3}. Applying It\^o's formula to $u$, we deduce that,
\begin{align*}
   u(\tilde{Z}^{N}(t)) =&u(z^{N})+\int_{0}^{t}\big<\nabla u(\tilde{Z}^{N}(s)), b(\tilde{Z}^{N}(s))\big>ds-\int_{0}^{t}\big<\nabla u(\tilde{Z}^{N}(s)), g_{N}(\tilde{Z}^{N}(s))\big>ds   \\
    +&\int_{0}^{t}\big<\nabla u(\tilde{Z}^{N}(s)), d\widetilde{\mathcal{M}}^{N}(s)\big> 
    +\sum_{s\leq t}\big[u(\tilde{Z}^{N}(s))-u(\tilde{Z}^{N}(s^{-}))-\big<\nabla u(\tilde{Z}^{N}(s^{-})), \Delta\tilde{Z}^{N}(s)\big>\big].
\end{align*}
Thus, we can use the Assumption \ref{assump3} \ref{assump34} to deduce
\begin{equation}\label{ineqle1}
   u(\tilde{Z}^{N}(t)) \geq u(z^{N})-\int_{0}^{t}\big<\nabla u(\tilde{Z}^{N}(s)), g_{N}(\tilde{Z}^{N}(s))\big>ds+\tilde{\mathcal{\zeta}}^{N,1}_{t}+\tilde{\mathcal{\zeta}}^{N,2}_{t},
\end{equation}
where
\begin{align*}
    \tilde{\mathcal{\zeta}}^{N,1}_{t}&=\int_{0}^{t}\big<\nabla u(\tilde{Z}^{N}(s)), d\widetilde{\mathcal{M}}^{N}(s)\big>    \\
    \tilde{\mathcal{\zeta}}^{N,2}_{t}&=\sum_{s\leq t}\big[u(\tilde{Z}^{N}(s))-u(\tilde{Z}^{N}(s^{-}))-\big<\nabla u(\tilde{Z}^{N}(s^{-})), \Delta\tilde{Z}^{N}(s)\big>\big].  
\end{align*}
Moreover $\underset{0\leq t\leq T}{\sup}|\tilde{\mathcal{\zeta}}^{N,1}_{t}|\underset{N \to +\infty}{\overset{\mathbb{P}}{\longrightarrow}}0$. Indeed, again from Doob's inequality, 
\begin{align*}
    \mathbb{E}\Big(\sup_{0\leq t\leq T}|\tilde{\mathcal{\zeta}}^{N,1}_{t}|^{2}\Big)&\leq4\mathbb{E}\big(|\tilde{\mathcal{\zeta}}^{N,1}_{T}|^{2}\big)   \\
    &\leq4\mathbb{E}\Big(\big<\tilde{\mathcal{\zeta}}^{N,1}\big>_{T}\Big).  
\end{align*}
But
\begin{align*}
    \big<\tilde{\mathcal{\zeta}}^{N,1}\big>_{T}&=\int_{0}^{T}|\nabla u(\tilde{Z}^{N}(t))|^{2}d\big<\widetilde{\mathcal{M}}^{N}\big>_{t}   \\
    &=\frac{1}{N}\sum_{j=1}^{k}|h_{j}|^{2}\int_{0}^{T} |\nabla u(\tilde{Z}^{N}(t))|^{2}\mathbf{1}_{\big\{ \tilde{Z}^{N}(t)+\frac{h_{j}}{N}\not\in\bar{O}\big\}}\beta_{j}(\widetilde{Z}^{N}(t))dt \\
    &\leq\frac{k d \sigma K_{1} T}{N}.
\end{align*}
Then
\begin{equation*}
\mathbb{E}\Big(\sup_{0\leq t\leq T}|\tilde{\mathcal{\zeta}}^{N,1}_{t}|^{2}\Big)\leq\frac{k d \sigma K_{1} T}{N}\to0\quad\text{as}\quad N\to\infty.
\end{equation*}
We also have $\underset{0\leq t\leq T}{\sup} |\tilde{\mathcal{\zeta}}^{N,2}_{t}|\underset{N \to +\infty}{\overset{\mathbb{P}}{\longrightarrow}}0$. Indeed by Taylor's expansion, if $0\leq t\leq T$ is a jump time of $\tilde{Z}^{N}(t)$,
\begin{equation*}
u(\tilde{Z}^{N}(t))-u(\tilde{Z}^{N}(t^{-}))=\Big<\nabla u\big(\tilde{Z}^{N}(t^{-})+\theta\Delta\tilde{Z}^{N}(t)\big), \Delta\tilde{Z}^{N}(t)\Big>\  \text{ for some random } 0\leq\theta\leq1.
\end{equation*} 
Consequently, since $u\in C^{1}_{b}(\bar{O})$ and $|\Delta \tilde{Z}^{N}(t)|\le\sqrt{d}/N$,
\begin{align*}
   & |u(\tilde{Z}^{N}(t))-u(\tilde{Z}^{N}(t^{-}))-\big<\nabla u(\tilde{Z}^{N}(t^{-})), \Delta\tilde{Z}^{N}(t)\big>|  \\
   &=\Big |\Big<\nabla u\big(\tilde{Z}^{N}(t^{-})+\theta\Delta\tilde{Z}^{N}(t)\big)-\nabla u\big(\tilde{Z}^{N}(t^{-})\big), \Delta\tilde{Z}^{N}(t)\Big> \Big | \\
    &\leq\Big |\nabla u\big(\tilde{Z}^{N}(t^{-})+\theta\Delta\tilde{Z}^{N}(t)\big)-\nabla u\big(\tilde{Z}^{N}(t^{-})\big)\Big |\times|\Delta \tilde{Z}^{N}(t)| \\
    &\leq \frac{\sqrt{d}}{N}\sup_{z\in\bar{O}, |\theta\Delta z|\leq\frac{\sqrt{d}}{N}}|\nabla u(z+\theta\Delta z)-\nabla u(z)|:= \frac{\sqrt{d}}{N}\delta_{N}, \text{with}\  \delta_{N}\to0\  \text{as}\  N\to\infty.
\end{align*}
 It follows that
\begin{align*}
   \sup_{0\leq t\leq T} |\tilde{\mathcal{\zeta}}^{N,2}_{t}|&= \sup_{t\leq T}\big|\sum_{s\leq t}\big[u(\tilde{Z}^{N}(s))-u(\tilde{Z}^{N}(s^{-}))-\big<\nabla u(\tilde{Z}^{N}(s^{-})), \Delta\tilde{Z}^{N}(s)\big>\big]\big|  \\
    &\leq \sum_{s\leq T}\big |u(\tilde{Z}^{N}(s))-u(\tilde{Z}^{N}(s^{-}))-\big<\nabla u(\tilde{Z}^{N}(s^{-})), \Delta\tilde{Z}^{N}(s)\big>\big | \\
    &\leq \frac{\sqrt{d}}{N}\delta_{N}\sum^{k}_{j=1}Q^{N,j}_{T}
\end{align*}
Then 
\begin{align*}
\mathbb{E}\Big(\sup_{0\leq t\leq T}|\tilde{\mathcal{\zeta}}^{N,2}_{t}|\Big)&\leq\sqrt{d}\frac{1}{N}\mathbb{E}\Big(\sum^{k}_{j=1}Q^{N,j}_{T}\Big)\delta_{N}  \\
&\leq C'k\delta_{N}.
\end{align*}
Let $\delta>0$ and 
\begin{equation*}
\tilde{\mathcal{B}}^{N}=\Big\{z\in\bar{O}: \sum_{j=1}^{k}\mathbf{1}_{\big\{ z+\frac{h_{j}}{N}\not\in\bar{O}\big\}}>0\Big\}
\end{equation*}
and with the convention that $\inf\emptyset=\infty$, let $T^{N,1}_{\delta}$ be the stopping time defined by
\begin{equation*}
    T^{N,1}_{\delta}:= \inf\Big\{t>0: \widetilde{\mathcal{X}}^{N}(t)\geq2\delta/3\Big\}\land T.
\end{equation*}
By using \eqref{ineqle1} and the Assumption \ref{assump3} \ref{assump35} we have for any $t\in[T^{N,1}_{\delta}, T]$,
\begin{equation*}
u(\tilde{Z}^{N}(t))\geq\rho\frac{2\delta}{3}+\tilde{\mathcal{\zeta}}^{N,1}_{t}+\tilde{\mathcal{\zeta}}^{N,2}_{t}.
\end{equation*}
We deduce that
\begin{equation*}
\inf_{T^{N,1}_{\delta}<t<T}u(\tilde{Z}^{N}(t))\geq\rho\frac{2\delta}{3}+\inf_{T^{N,1}_{\delta}<t<T}\Big(\tilde{\mathcal{\zeta}}^{N,1}_{t}+\tilde{\mathcal{\zeta}}^{N,2}_{t}\Big).
\end{equation*}
Then 
\begin{equation}\label{limitprob}
\underset{N\to\infty}{\lim}\mathbb{P}_{z}\Big(\inf_{T^{N,1}_{\delta}<t<T}u(\tilde{Z}^{N}(t))\geq\rho\frac{\delta}{3}\Big)=1.
\end{equation}
With the convention that $\inf\emptyset=\infty$, let  $T^{N,2}_{\delta}$ the stopping time defined by
\begin{equation*}
    T^{N,2}_{\delta}:=\inf\Big\{t>T^{N,1}_{\delta}: \tilde{Z}^{N}(t)\in \tilde{\mathcal{B}}^{N}\Big\}\land T. 
\end{equation*}
As $u=0$ on the boundary $\partial O$ and $\nabla u$ is bounded, there exists a constant $\rho'$ such that if $\tilde{Z}^N(t)\in \tilde{\mathcal{B}}^N$ then $u(\tilde{Z}^N(t))<\rho'/N$ and then we deduce from \eqref{limitprob} that for any $\delta>0$,
\begin{equation*}
\underset{N\to\infty}{\lim}\mathbb{P}_{z}(T^{N,2}_{\delta}<T)=0,
\end{equation*}
and consequently
\begin{equation*}
\underset{N\to\infty}{\lim}\mathbb{P}_{z}\Big(\int_{0}^{T}\sum_{j=1}^{k}\mathbf{1}_{\big\{ \tilde{Z}^{N}(s)+\frac{h_{j}}{N}\not\in\bar{O}\big\}}ds>\delta\Big)=0.
\end{equation*}

\end{proof}

\begin{theorem}\label{LLN}
Let $\tilde{Z}^{N}(t)$ be  the sequence of processes solution of the reflecting Poisson driven $SDE$ \eqref{reflexionsde2} with the initial condition $z^{N}$. Assuming that the $\beta_{j}$'s are Lipschitz continuous, then for all $T>0$, 
\begin{equation*}
 \sup_{t\leq T}|\tilde{Z}^{N}(t)-Y(t)|\underset{N \to +\infty}{\overset{\mathbb{P}}{\longrightarrow}}0,
 \end{equation*}
 where $Y(t)$ is the unique solution of \eqref{ODE}.
\end{theorem}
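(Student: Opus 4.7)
The plan is to exploit the semimartingale decomposition \eqref{reflexion2} already derived, subtract the integral form of the ODE \eqref{ODE}, and then apply Gronwall's inequality, passing to the limit with the help of the two preceding lemmas. Most of the heavy lifting has already been done: the martingale part $\widetilde{\mathcal{M}}^N$ is controlled by Lemma \ref{le:1}, and the ``boundary correction'' $g_N$ is controlled in integrated form by Lemma \ref{le:2}. What remains is to assemble these ingredients.

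First, I would subtract $Y(t) = z + \int_0^t b(Y(s))\,ds$ from \eqref{reflexion2} to obtain
\begin{equation*}
\tilde{Z}^N(t) - Y(t) = (z^N - z) + \int_0^t [b(\tilde{Z}^N(s)) - b(Y(s))]\,ds - \int_0^t g_N(\tilde{Z}^N(s))\,ds + \widetilde{\mathcal{M}}^N(t).
\end{equation*}
Since each $\beta_j$ is Lipschitz with constant $C$ by Assumption \ref{assumpinq}\ref{assump23} and $|h_j|\leq\sqrt{d}$, the drift $b$ is Lipschitz with some constant $L$. Taking absolute values and the supremum in $t$, one gets
\begin{equation*}
\sup_{s\leq t}|\tilde{Z}^N(s) - Y(s)| \leq |z^N - z| + L\int_0^t \sup_{r\leq s}|\tilde{Z}^N(r) - Y(r)|\,ds + \int_0^T |g_N(\tilde{Z}^N(s))|\,ds + \sup_{s\leq T}|\widetilde{\mathcal{M}}^N(s)|.
\end{equation*}
From the definition of $g_N$ together with the uniform bound $\sigma$ on the rates and $|h_j|\leq \sqrt d$, one has $|g_N(z)|\leq \sigma\sqrt{d}\,\sum_j \mathbf{1}_{\{z+h_j/N\notin\bar O\}}$, so $\int_0^T |g_N(\tilde{Z}^N(s))|\,ds \leq \sigma\sqrt{d}\,\widetilde{\mathcal{X}}^N(T)$.

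Applying Gronwall's inequality to the preceding bound then yields
\begin{equation*}
\sup_{s\leq T}|\tilde{Z}^N(s) - Y(s)| \leq e^{LT}\Big(|z^N - z| + \sigma\sqrt{d}\,\widetilde{\mathcal{X}}^N(T) + \sup_{s\leq T}|\widetilde{\mathcal{M}}^N(s)|\Big),
\end{equation*}
and I would conclude by noting that the right-hand side tends to $0$ in probability: the deterministic term $|z^N-z|$ vanishes by construction of $z^N$ (it is at most of order $1/N$, or in the worst case, the distance from $z$ to $\bar O^{(N)}$), while the remaining two terms vanish by Lemma \ref{le:1} and Lemma \ref{le:2}, respectively. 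In light of those two lemmas there is no real obstacle here; the only mild point to keep in mind is that $g_N$ is bounded uniformly in $N$ on $\bar O$ so that $\widetilde{\mathcal X}^N(T)\to 0$ in probability does translate into $\int_0^T|g_N(\tilde Z^N)|\,ds\to 0$ in probability.
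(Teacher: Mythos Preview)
Your proposal is correct and follows essentially the same approach as the paper: subtract the ODE from the semimartingale decomposition \eqref{reflexion2}, use the Lipschitz property of $b$, apply Gronwall, and conclude via Lemmas \ref{le:1} and \ref{le:2}. The only cosmetic difference is that the paper introduces an auxiliary process $U^N(t)=\tilde Z^N(t)-\Phi^N(t)$ with $\Phi^N(t)=\widetilde{\mathcal M}^N(t)-\int_0^t g_N(\tilde Z^N(s))\,ds$ and applies Gronwall to $|U^N-Y|$, whereas you apply Gronwall directly to $\sup_{s\le t}|\tilde Z^N(s)-Y(s)|$; both routes are equivalent.
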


\begin{proof}
We have
\begin{align*}
    \tilde{Z}^{N}(t)&= z^{N}+\frac{1}{N}\sum_{j=1}^{k}h_{j}\int_{0}^{t}\mathbf{1}_{\{\tilde{Z}^{N}(s^{-})+\frac{h_{j}}{N}\in\bar{O}\}}dQ_{s}^{N,j}   \\
    &=z^{N}+\int_{0}^{t}b(\tilde{Z}^{N}(s))ds+\widetilde{\mathcal{M}}^{N}(t)-\int_{0}^{t}\sum_{j=1}^{k}\mathbf{1}_{\big\{ \tilde{Z}^{N}(s)+\frac{h_{j}}{N}\not\in\bar{O}\big\}}\beta_{j}(\tilde{Z}^{N}(s))h_{j}ds,
\end{align*}
where $\widetilde{\mathcal{M}}^{N}(t)$ is defined as in Lemma \ref{le:1}.  We define\begin{equation*}
\Phi^{N}(t)=\widetilde{\mathcal{M}}^{N}(t)-\int_{0}^{t}\sum_{j=1}^{k}\mathbf{1}_{\big\{ \tilde{Z}^{N}(s)+\frac{h_{j}}{N}\not\in\bar{O}\big\}}\beta_{j}(\tilde{Z}^{N}(s))h_{j}ds
\end{equation*}
and
\begin{equation*}
U^{N}(t)=\tilde{Z}^{N}(t)-\Phi^{N}(t).
\end{equation*}
Then 
\begin{equation*}
|\tilde{Z}^{N}(t)-Y(t)|\leq |U^{N}(t)-Y(t)|+|\Phi^{N}(t)|.
\end{equation*}
Moreover we have the following inequalities where the second one follows from the Lipschitz character of the $\beta_{j}$'s
\begin{align*}
    |U^{N}(t)-Y(t)|&\leq |z^{N}-z| +\int_{0}^{t}|b(U^{N}(s)+\Phi^{N}(s))-b(Y(s))|ds  \\
    &\leq |z^{N}-z| +k C \int_{0}^{t}|U^{N}(s)-Y(s)|ds+k C  \int_{0}^{t}|\Phi^{N}(s)|ds .
\end{align*}
We now deduce from Gronwall's inequality 
\begin{equation*}
 |U^{N}(t)-Y(t)|\leq\Big(|z^{N}-z|+k C\int_{0}^{t}|\Phi^{N}(s)|ds\Big)\exp\{k C t\},
\end{equation*}
hence
\begin{equation*}
|\tilde{Z}^{N}(t)-Y(t)|\leq\Big(|z^{N}-z|+k C\int_{0}^{t}|\Phi^{N}(s)|ds\Big)\exp\{k C t\}+|\Phi^{N}(t)|. 
\end{equation*}
Therefore
\begin{align*}
   \sup_{0\leq t\leq T} |\tilde{Z}^{N}(t)-Y(t)|\leq  |z^{N}-z|\exp\{k C T\}+\big(1+k C T\exp\{k C T\}\big)\sup_{0\leq t\leq T}|\Phi^{N}(t)|.
\end{align*}
 Lemmas \ref{le:1} and \ref{le:2} imply that $\sup_{t\leq T}|\Phi^{N}(t)|{\overset{\mathbb{P}}{\longrightarrow}}0$, as $N\to\infty$.
 The result follows.
\end{proof}

\section{Large Deviations: preliminary results}\label{sec4}
For all $\phi\in\mathcal{AC}_{T,\bar{O}}$, the subspace of $D_{T,\bar{O}}$ consisting of absolutely continuous functions, let $\mathcal{A}_{d}(\phi)$ denote the (possibly empty) set of $\mathbb{R}^{k}_{+}$-valued Borel measurable functions $\mu$  such that
\begin{equation*}\label{allowed}
  \frac{d\phi_{t}}{dt}=\sum_{j=1}^k \mu^{j}_{t} h_{j}, \quad\text{t a.e}.
\end{equation*}
We define the rate function
\begin{equation*}
I_{T}(\phi) :=
 \begin{cases}
\underset{\mu\in\mathcal{A}_{d}(\phi)}{\inf}~~I_{T}(\phi|\mu),& \text{ if } \phi\in\mathcal{AC}_{T,\bar{O}}, \\
\infty ,& \text{ else,}
\end{cases}
\end{equation*}
where
\begin{equation*}
  I_{T}(\phi|\mu)=\int_{0}^{T}\sum_{j=1}^{k}f(\mu_{t}^{j},\beta_{j}(\phi_{t}))dt
\end{equation*}
with $f(\nu,\omega)=\nu\log(\nu/\omega)-\nu+\omega$.
 We assume in the definition of $f(\nu,\omega)$ that for all $\nu>0$, $\log(\nu/0)=\infty$ and $0\log(0/0)=0\log(0)=0$.

 The following result is a direct consequence of Lemma 4.22 in \cite{Kratz2014}
\begin{lemma}\label{semiconz}
Let $F$ a closed subset of $D_{T,A}$ and $z\in A$. We have
\begin{equation*}
\lim_{\epsilon\to0}\inf_{y\in A, |y-z|<\epsilon}\inf_{\phi\in F, \phi_{0}=y}I_{T}(\phi)=\inf_{\phi\in F, \phi_{0}=z}I_{T}(\phi).
\end{equation*}
\end{lemma}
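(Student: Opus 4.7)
The plan is to establish the stated equality by proving the two inequalities separately. Writing $L(\epsilon):=\inf_{y\in A,|y-z|<\epsilon}\inf_{\phi\in F,\phi_{0}=y}I_{T}(\phi)$, the direction $\lim_{\epsilon\to 0}L(\epsilon)\leq\inf_{\phi\in F,\phi_{0}=z}I_{T}(\phi)$ is immediate: for every $\epsilon>0$ the point $y=z$ itself lies in the ball $\{y\in A:|y-z|<\epsilon\}$, so $L(\epsilon)$ is bounded above by the inner infimum evaluated at $y=z$. Since $L(\epsilon)$ is monotone non-decreasing as $\epsilon\downarrow 0$, the limit exists and satisfies this bound.

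The substantive direction is $\lim_{\epsilon\to 0}L(\epsilon)\geq\inf_{\phi\in F,\phi_{0}=z}I_{T}(\phi)$. First I would choose sequences $\epsilon_{n}\to 0$, $y_{n}\in A$ with $|y_{n}-z|<\epsilon_{n}$ (hence $y_{n}\to z$), and $\phi_{n}\in F$ with $\phi_{n}(0)=y_{n}$ and $I_{T}(\phi_{n})\leq L(\epsilon_{n})+1/n$. If $\liminf_{n}L(\epsilon_{n})=\infty$ the inequality is trivial, so I may assume $\sup_{n}I_{T}(\phi_{n})<\infty$. Invoking goodness of the rate function (the level sets $\{I_{T}\leq K\}$ are compact in $D_{T,A}$ endowed with the Skorokhod topology, which is proved in \cite{Kratz2014} and depends only on the analytic form of $I_{T}$, not on the underlying dynamics being reflected), I extract a subsequence along which $\phi_{n}\to\phi$ in $D_{T,A}$. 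Closedness of $F$ gives $\phi\in F$; Skorokhod convergence together with the fact that the associated time changes fix $0$ yields $\phi(0)=\lim_{n}\phi_{n}(0)=z$; and lower semicontinuity of $I_{T}$ (also a consequence of goodness) yields $I_{T}(\phi)\leq\liminf_{n}I_{T}(\phi_{n})\leq\lim_{n}L(\epsilon_{n})$. This produces an admissible $\phi$ witnessing $\inf_{\psi\in F,\psi_{0}=z}I_{T}(\psi)\leq\lim_{\epsilon\to 0}L(\epsilon)$, completing the proof.

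The main obstacle, and the reason the result is invoked rather than proved from scratch, is the compactness of the sublevel sets of $I_{T}$ in the Skorokhod topology. This is the content of Lemma~4.22 of \cite{Kratz2014}, established there for the unreflected rate function, which has the same analytic form as ours (the reflection affects the SDE \eqref{reflexionsde2} but not the formula defining $I_{T}$). Once this compactness and the attendant lower semicontinuity are granted, the argument above is essentially a textbook $\Gamma$--limit computation, with the only nontrivial input being that $\phi_{n}(0)\to\phi(0)$ along Skorokhod-convergent subsequences, which holds because the Skorokhod time-change reparametrisations preserve the endpoint $t=0$.
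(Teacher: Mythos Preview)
Your argument is correct and is precisely the standard route one would expect. The paper itself does not give a proof of this lemma at all: it simply states that the result ``is a direct consequence of Lemma~4.22 in \cite{Kratz2014}'', so there is nothing to compare against beyond the reference. Your proof supplies the details that sit behind that citation: monotonicity for the easy inequality, and for the harder one the extraction of a near-minimising sequence, compactness of the sublevel sets of $I_T$, lower semicontinuity, and the observation that Skorokhod time changes fix the left endpoint so that $\phi_n(0)\to\phi(0)$.

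One small bibliographic point: in the paper the compactness of $\bigcup_{|y-z|\le\epsilon}\Phi_y(s)$ is attributed to Proposition~4.21 of \cite{Kratz2014} (see the proof of Theorem~\ref{upperbound11}), whereas Lemma~4.22 there appears to be the semicontinuity-in-the-initial-condition statement itself. So when you write that compactness of sublevel sets ``is the content of Lemma~4.22'', you may be pointing to the wrong label; the underlying mathematics is unaffected.
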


 The next lemma states a large deviation estimate for Poisson random variables.
\begin{lemma}\label{le17}
 Let $Y_{1}$,$Y_{2}$,...be independent Poisson random variables with mean $\sigma\epsilon$. For all $N\in\mathbb{N}$, let
\begin{equation*}
 \bar{Y}^{N}=\frac{1}{N}\sum_{n=1}^{N}Y_{n}.
\end{equation*}
For any $s>0$ there exist $K, \epsilon_{0}>0$ and $N_{0}\in\mathbb{N}$ such that with
 \begin{equation}\label{g(eps)}
 g(\epsilon)=K\sqrt{\log^{-1}(\epsilon^{-1})},
 \end{equation} 
 we have
\begin{equation*}
 \mathbb{P}^{N}(\bar{Y}^{N}>g(\epsilon))<\exp\{-sN\}
\end{equation*}
for all $\epsilon<\epsilon_{0}$ and $N>N_{0}$.
\end{lemma}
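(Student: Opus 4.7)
The plan is to reduce the statement to the classical Chernoff--Cram\'er bound for Poisson sums, and then to check that the resulting rate evaluated at $g(\epsilon)$ diverges to infinity as $\epsilon\to 0$ fast enough to dominate any prescribed $s$.

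First I would note that $S_{N}:=\sum_{n=1}^{N}Y_{n}$ has a Poisson distribution with parameter $N\sigma\epsilon$ and moment generating function $\mathbb{E}[e^{\lambda S_{N}}]=\exp\bigl(N\sigma\epsilon(e^{\lambda}-1)\bigr)$. The exponential Markov inequality then yields, for every $\lambda>0$,
$$\mathbb{P}(\bar{Y}^{N}>g(\epsilon))\le\exp\Bigl\{-N\bigl[\lambda\,g(\epsilon)-\sigma\epsilon(e^{\lambda}-1)\bigr]\Bigr\}.$$
Choosing the optimizer $\lambda^{\ast}=\log\!\bigl(g(\epsilon)/(\sigma\epsilon)\bigr)$, which is positive once $\epsilon$ is small enough that $g(\epsilon)>\sigma\epsilon$ (automatic for small $\epsilon$, since $g$ decays only as $1/\sqrt{\log(\epsilon^{-1})}$ whereas $\sigma\epsilon$ decays linearly), one obtains the familiar Poisson Chernoff bound
$$\mathbb{P}(\bar{Y}^{N}>g(\epsilon))\le\exp\bigl\{-N\,I_{\epsilon}(g(\epsilon))\bigr\},\qquad I_{\epsilon}(x):=x\log\!\bigl(x/(\sigma\epsilon)\bigr)-x+\sigma\epsilon.$$

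The remaining task is to show that $I_{\epsilon}(g(\epsilon))\to\infty$ as $\epsilon\to 0$. Expanding gives
$$I_{\epsilon}(g(\epsilon))=g(\epsilon)\log(\epsilon^{-1})+g(\epsilon)\bigl[\log K-\tfrac{1}{2}\log\log(\epsilon^{-1})-\log\sigma\bigr]-g(\epsilon)+\sigma\epsilon,$$
and the dominant term is $g(\epsilon)\log(\epsilon^{-1})=K\sqrt{\log(\epsilon^{-1})}\to\infty$, while the bracketed correction is of order $g(\epsilon)\log\log(\epsilon^{-1})=o\bigl(\sqrt{\log(\epsilon^{-1})}\bigr)$ and the last two summands vanish. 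Hence, for any $s>0$, fixing $K$ arbitrarily (say $K=1$) and then choosing $\epsilon_{0}>0$ small enough that $I_{\epsilon}(g(\epsilon))>s$ whenever $\epsilon<\epsilon_{0}$, the Chernoff bound yields $\mathbb{P}(\bar{Y}^{N}>g(\epsilon))<e^{-sN}$ uniformly in $N\ge 1$, so $N_{0}$ may be taken equal to $1$. The only substantive content is this last asymptotic estimate, and I do not foresee any real obstacle. Note that the shape $g(\epsilon)=K/\sqrt{\log(\epsilon^{-1})}$ is tailored precisely so that $g(\epsilon)\to 0$ (the property for which this lemma will be used elsewhere in the paper) while simultaneously $g(\epsilon)\log(\epsilon^{-1})\to\infty$, which is exactly what makes the Chernoff exponent blow up here.
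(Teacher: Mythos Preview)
Your proof is correct and follows essentially the same route as the paper: both compute the Poisson Cram\'er rate $\Lambda^{*}_{\epsilon}(x)=x\log(x/(\sigma\epsilon))-x+\sigma\epsilon$ and verify that $\Lambda^{*}_{\epsilon}(g(\epsilon))\sim K\sqrt{\log(\epsilon^{-1})}\to\infty$. The only difference is cosmetic: the paper invokes Cram\'er's theorem (yielding a $\limsup$ in $N$, whence the need for an $N_{0}$), whereas you use the direct Chernoff bound, which is non-asymptotic and actually gives the inequality for every $N\ge 1$---a slight strengthening.
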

\begin{proof}
 We apply Cramer's theorem, see e.g \cite{Dembo2009}, chapter 2 :
 \begin{equation*}
 \limsup_{N\to\infty}\frac{1}{N}\log(\mathbb{P}^{N}(\bar{Y}^{N}>g(\epsilon)))\leq-\inf_{x\geq g(\epsilon)}\Lambda^{*}_{\epsilon}(x),
\end{equation*}
where $\Lambda^{*}_{\epsilon}(x)=\sup_{\lambda\in\mathbb{R}}\{\lambda x-\Lambda_{\epsilon}(\lambda)\}$ with 
\begin{align*}
 &\Lambda_{\epsilon}(\lambda)=\log(\mathbb{E}(e^{\lambda Y_{1}})=\sigma\epsilon(e^{\lambda}-1).
\end{align*}
We deduce that 
\begin{equation*}
 \Lambda^{*}_{\epsilon}(x)=x\log\frac{x}{\sigma\epsilon}-x+\sigma\epsilon.
\end{equation*}
This last function is convex It reaches its infimum at $x=\sigma\epsilon$ and as $\lim_{\epsilon\rightarrow0}\frac{g(\epsilon)}{\sigma\epsilon}=+\infty$
there exists $\epsilon_{1}>0$ such that $g(\epsilon)>\sigma\epsilon$ for all $\epsilon<\epsilon_{1}$ and then, with the notation
$a_\epsilon\approx b_\epsilon$ meaning that there exists a constant $C$ such that $C^{-1} b_\epsilon\le a_\epsilon\le Cb_\epsilon$ for all $\epsilon>0$,
\begin{align*}
 \inf_{x\geq g(\epsilon)}\Lambda^{*}_{\epsilon}(x)&=g(\epsilon)\log\frac{g(\epsilon)}{\sigma\epsilon}-g(\epsilon)+\sigma\epsilon\\
 &=g(\epsilon)\log(g(\epsilon))-g(\epsilon)\log(\sigma\epsilon)-g(\epsilon)+\sigma\epsilon\\
 &\approx g(\epsilon)\log(1/\epsilon) \\
 &\approx K\sqrt{\log(1/\epsilon)}\rightarrow\infty\quad\text{as}\quad\epsilon\rightarrow0.
\end{align*}
Then there exists $\epsilon_{2}>0$ such that $\inf_{x\geq g(\epsilon)}\Lambda^{*}_{\epsilon}(x)>s$ for all $\epsilon<\epsilon_{2}$. The lemma follows by choosing $\epsilon_{0}=\min\{\epsilon_{1}, \epsilon_{2}\}$.
\end{proof}

\section{The Lower Bound} \label{Seclower}

 This section is summarized by the following result whose proof is essentially the same as that of Theorem 2.4 in section 2 of \cite{Sam2017}.
It mainly uses a Girsanov change of probability for doubly stochastic Poisson processes as well as the law of large numbers established in section \ref{weakLLN}.
\begin{theorem}\label{LDP}
Assume that both Assumption \ref{assumpinq} and Assumption \ref{assump3} are satisfied. Let $\tilde{Z}^{N,z}$ be the solution of \eqref{reflexionsde2} with $t$ restricted to $[0, T]$.
\begin{description}
  \item[a)] For $z\in \bar{O}$, $\phi\in D_{T,\bar{O}}$, $\phi_{0}=z$, $\eta>0$ and $\delta>0$ there exists $N_{\eta,\delta}\in\mathbb{N}$ such that for all $N>N_{\eta,\delta}$
  \begin{equation*}
  \inf_{y: |y-z|<\delta/2}\mathbb{P}_{y}\Big(\|\tilde{Z}^{N}-\phi\|_{T}<\delta\Big)\geq\exp\{-N(I_{T}(\phi)+\eta)\}.
  \end{equation*}
  \item[b)] For any open subset $G$ of $D_{T,\bar{O}}$, the following hold uniformly over $z\in\bar{O}$
  \begin{equation*}
 \underset{y\to z}{\liminf_{N\to\infty}}\frac{1}{N}\log\mathbb{P}_{y}(\tilde{Z}^{N}\in G)\geq-\inf_{\phi\in G, \phi_{0}=z} I_{T}(\phi).
  \end{equation*}
 \end{description}
\end{theorem}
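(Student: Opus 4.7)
The plan follows the two-step structure of Theorem 2.4 in \cite{Sam2017}: first prove the sharp local estimate (a), then deduce (b) by a routine covering argument. For (b), given $\phi \in G$ with $\phi_0 = z$ and $I_T(\phi) < \infty$, choose $\delta$ so that the $\delta$-ball around $\phi$ lies in $G$ and apply (a); uniformity in $y \to z$ is delivered by Lemma \ref{semiconz}.

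For (a), assume $I_T(\phi) < \infty$ and fix $\mu \in \mathcal{A}_d(\phi)$ with $I_T(\phi|\mu) \leq I_T(\phi) + \eta/3$. The core idea is to introduce a tilted probability measure $\widetilde{\mathbb{P}}^N$ under which the Poisson compensator of $Q^{N,j}$ becomes $N\int_0^{\cdot}\mu_s^j\,ds$. The Poisson Girsanov formula then gives a density of the form
\[
\frac{d\widetilde{\mathbb{P}}^N}{d\mathbb{P}_y^N}\bigg|_{\mathcal{F}_T} = \exp\Bigl\{\sum_{j=1}^k \int_0^T \log\tfrac{\mu_s^j}{\beta_j(\tilde{Z}^N(s))}\,dQ_s^{N,j} + N\sum_{j=1}^k\int_0^T[\beta_j(\tilde{Z}^N(s)) - \mu_s^j]\,ds\Bigr\}.
\]
Under $\widetilde{\mathbb{P}}^N$ the law of large numbers of Theorem \ref{LLN} (adapted to variable intensities) gives $\tilde{Z}^N \to \phi$ in probability. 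Inverting the density and restricting to the event $\{\|\tilde{Z}^N - \phi\|_T < \delta\}$ would then yield $\mathbb{P}_y(\|\tilde{Z}^N - \phi\|_T < \delta) \geq \exp\{-N(I_T(\phi|\mu) + 2\eta/3)\}$ for $N$ large, provided both $\mu^j$ and $\beta_j(\phi)$ are bounded away from $0$ and the reflection indicators in \eqref{reflexionsde2} never fire.

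Both caveats fail precisely when $\phi$ approaches $\partial O$, since some $\beta_j$ may vanish there and the reflection terms activate. The fix, already built into Assumption \ref{assumpinq}, is the push-into-the-interior device. Replace $\phi$ by $\phi^a_t := \phi_t + a(z_0 - \phi_t)$ and construct a matching $\mu^a \in \mathcal{A}_d(\phi^a)$. By Assumption \ref{assumpinq} \ref{assump22}, $\dist(\phi^a_t, \partial O) \geq c_2 a$ and $\|\phi^a - \phi\|_T \leq c_1 a$; by Assumption \ref{assumpinq} \ref{assump25}, $\beta_j(\phi^a_t) \geq C_a \geq e^{-a^{-\nu}}$ along $\phi^a$; and Assumption \ref{assumpinq} \ref{assump24} guarantees that the natural choice of $\mu^a$ also stays away from $0$ where $\beta_j$ was originally small. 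On the event $\{\|\tilde{Z}^N - \phi^a\|_T < c_2 a/2\}$, the distance from $\tilde{Z}^N$ to $\partial O$ exceeds $c_2 a/2 \gg 1/N$ for $N$ large, so every reflection indicator in \eqref{reflexionsde2} is inactive and the Girsanov calculation reduces to the unreflected case of \cite{Sam2017}. Sending $a \to 0$ along a sequence with $a^{-\nu}/N \to 0$ and $c_1 a < \delta/2$, and using continuity of $f$ to guarantee $I_T(\phi^a|\mu^a) \to I_T(\phi|\mu)$, delivers (a).

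The main obstacle is quantitative: one must simultaneously control (i) the Girsanov log-singularity, which is of order $\log C_a^{-1} \leq a^{-\nu}$ per unit time along $\phi^a$, (ii) the tail probability of the LLN under $\widetilde{\mathbb{P}}^N$, handled via Lemma \ref{le17} together with a Doob-type estimate on the tilted martingale, and (iii) the approximation error $I_T(\phi^a|\mu^a) - I_T(\phi|\mu)$ arising from pushing into the interior. Assumption \ref{assumpinq} \ref{assump25} is exactly the quantitative statement that makes all three errors summable below $\eta/3$ for a suitable choice $a = a(N) \to 0$; beyond this coordination, the remainder of the argument is a direct adaptation of the unreflected proof in \cite{Sam2017}.
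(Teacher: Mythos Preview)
Your proposal is correct and follows essentially the same route as the paper. The paper gives no detailed argument: it simply asserts that the proof is that of Theorem~2.4 in \cite{Sam2017}, using a Girsanov change of measure for doubly stochastic Poisson processes together with the law of large numbers of Section~\ref{weakLLN}. Your sketch supplies the details, and in particular correctly isolates the one new wrinkle for the reflected process: once $\phi$ is pushed to $\phi^a$ via Assumption~\ref{assumpinq}, on the event $\{\|\tilde Z^N-\phi^a\|_T<c_2a/2\}$ every indicator $\mathbf 1_{\{\tilde Z^N(s-)+h_j/N\notin\bar O\}}$ vanishes, the reflected and unreflected processes coincide, and the Girsanov/LLN machinery of \cite{Sam2017} applies verbatim.

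Two points of over-elaboration. First, the appeal to Lemma~\ref{le17} is out of place: that Poisson tail estimate is an upper-bound tool, whereas the lower bound only needs convergence in probability under the tilted measure, which follows from the ordinary LLN once the rates along $\phi^a$ are bounded away from zero. Second, there is no need to couple $a$ with $N$ via $a^{-\nu}/N\to0$. One fixes $a$ small depending only on $\eta$ and $\delta$ so that $\|\phi^a-\phi\|_T<\delta/2$ and $I_T(\phi^a|\mu^a)\le I_T(\phi|\mu)+\eta/3$, and then lets $N\to\infty$ for that fixed $a$; with $a$ fixed, $C_a$ is just some strictly positive constant and no quantitative rate is required. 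Accordingly, the precise decay rate $C_a\ge\exp\{-a^{-\nu}\}$ of Assumption~\ref{assumpinq}.\ref{assump25} plays no role in the lower bound---the paper invokes it only in Section~\ref{Secupper}---and your attribution of the ``main obstacle'' to it overstates its relevance here.
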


\section{The Upper Bound} \label{Secupper}

 In \cite{Sam2017}, the upper bound was established as a consequence of a result in \cite{dupuis1991large}, which does not apply here. This is why we need to detail the proof of the upper bound.

 In this section, we shall assume that both Assumption \ref{assumpinq} and Assumption \ref{assump3} are in force.

For all $\phi\in D_{T,\bar{O}}$  and $F\subset D_{T,\bar{O}}$, we define $ \rho_{T}(\phi, F)=\inf_{\psi\in F}\|\phi-\psi\|_{T}$.
For $z\in \bar{O}$, $\delta,s>0$ we define the sets $\Phi_{z}(s)=\{\psi\in D_{T,\bar{O}}: \psi_{0}=z, I_{T}(\psi)\leq s\}$ and
$F_{\delta}^{s}(z)=\{\phi\in D_{T,\bar{O}}: \rho_{T}(\phi, \Phi_{z}(s))\geq\delta\}$.

 The following Proposition  constitutes the main step in the proof of the upper bound. 
\begin{proposition}\label{upperb}
For $\delta$, $\eta$ and $s>0$ there exists $N_{0}\in\mathds{N}$ such that
\begin{equation}\label{loexpupper}
  \mathbb{P}_{z}(\tilde{Z}^N\in F_{\delta}^{s}(z))\leq\exp\{-N(s-\eta)\}
\end{equation}
whenever $N\geq N_{0}$ and $z\in \bar{O}$.
\end{proposition}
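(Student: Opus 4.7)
The plan is to follow the classical Freidlin--Wentzell scheme for the upper bound, adapted to the Poisson-driven setting with reflection and in the same spirit as the unreflected argument from \cite{Sam2017}. The idea is to discretize time, approximate $\tilde{Z}^N$ by a piecewise affine interpolation, control the discretization error via the Cram\'er-type estimate of Lemma \ref{le17}, and then apply a covering argument together with an exponential change of measure on the discretized trajectories.

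First, fix a large integer $n$ (to be chosen as a function of $s$, $\eta$ and $\delta$) and set $\Delta=T/n$, $t_i=i\Delta$ for $0\le i\le n$. Define the piecewise affine approximation $\hat{Z}^N$ of $\tilde{Z}^N$ by $\hat{Z}^N(t_i)=\tilde{Z}^N(t_i)$ and linear interpolation on each subinterval $[t_i,t_{i+1}]$. The first task is to show that, for $\Delta$ small enough,
\begin{equation*}
\mathbb{P}_z\bigl(\|\tilde{Z}^N-\hat{Z}^N\|_T>\delta/4\bigr)\le \exp\{-Ns\},
\end{equation*}
uniformly in $z\in\bar{O}$. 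The oscillation of $\tilde{Z}^N$ on a given subinterval is bounded by $N^{-1}\sqrt{d}$ times the total number of jumps of $\sum_j P_j$ on that subinterval, which is dominated by a Poisson variable of mean at most $k\sigma N\Delta$. Lemma \ref{le17} combined with a union bound over the $n$ subintervals then yields the claimed exponential bound.

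On the complementary event, one has the inclusion $\{\tilde{Z}^N\in F_\delta^s(z)\}\subset\{\hat{Z}^N\in F_{3\delta/4}^s(z)\}$. Since $\hat{Z}^N$ is entirely determined by the grid values $(\tilde{Z}^N(t_0),\ldots,\tilde{Z}^N(t_n))\in\bar{O}^{n+1}$ and $\bar{O}^{n+1}$ is compact, one covers it by finitely many balls of radius $\epsilon$ (with $\epsilon$ small compared to $\delta$), each giving rise to a representative piecewise affine path $\psi^\ell$ with $\psi^\ell_0=z$. For each $\ell$ such that $\psi^\ell\in F_{\delta/2}^s(z)$, one has $I_T(\psi^\ell)\ge s$. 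A direct Cram\'er estimate for the Poisson increments on each subinterval, obtained via exponential tilting jump type by jump type, yields
\begin{equation*}
\mathbb{P}_z\bigl(|\tilde{Z}^N(t_i)-\psi^\ell(t_i)|<\epsilon\text{ for all }i\bigr)\le \exp\{-N(I_T(\psi^\ell)-\eta/2)\}
\end{equation*}
for $\epsilon$ sufficiently small; the desired bound \eqref{loexpupper} then follows by a union bound over the finitely many representative paths.

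The main obstacle, and the reason we cannot quote the argument from \cite{Sam2017} verbatim, is the treatment of the reflection term. The suppression of jumps near $\partial O$ in \eqref{reflexionsde2} means that the effective Poisson increments are not the free ones, so the Cram\'er tilting does not apply directly. The key remedy is an exponential strengthening of Lemma \ref{le:2}: using Assumption \ref{assumpinq} parts \ref{assump24} and \ref{assump25}, together with Lemma \ref{le17} in place of the Chebyshev step used in Lemma \ref{le:2}, one would show that
\begin{equation*}
\mathbb{P}_z\bigl(\widetilde{\mathcal{X}}^N(T)\ge\kappa\bigr)\le\exp\{-Ns\}
\end{equation*}
for any $\kappa>0$ and $N$ large enough. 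On the complementary event $\{\widetilde{\mathcal{X}}^N(T)<\kappa\}$, the reflection correction $\int_0^t g_N(\tilde{Z}^N(s))\,ds$ is uniformly small, so the dynamics of $\tilde{Z}^N$ essentially coincide with those of the free Poisson-driven SDE, and the Cram\'er estimate above applies up to an error in the rate function that vanishes as $\kappa\to 0$.
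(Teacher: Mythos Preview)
Your overall architecture (discretize, control the oscillation via Lemma~\ref{le17}, then exponential tilting on the increments) is standard and would work for the unreflected SDE, but the proposed remedy for the reflection contains a genuine error and misses the paper's actual mechanism.

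The claim that $\mathbb{P}_z\bigl(\widetilde{\mathcal{X}}^N(T)\ge\kappa\bigr)\le\exp\{-Ns\}$ for \emph{every} $s>0$ is false. There are absolutely continuous trajectories $\phi$ in $\bar O$ that spend a macroscopic amount of time on $\partial O$ while having $I_T(\phi)<\infty$; by the lower bound (Theorem~\ref{LDP}) the probability that $\tilde Z^N$ stays within $\delta$ of such a $\phi$ is at least $\exp\{-N(I_T(\phi)+1)\}$, and on this event $\widetilde{\mathcal{X}}^N(T)$ is of order one since $\tilde{\mathcal B}^N$ contains a $c/N$--neighbourhood of $\partial O$. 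So $\widetilde{\mathcal{X}}^N(T)$ is not super-exponentially small, and the strategy of restricting to $\{\widetilde{\mathcal{X}}^N(T)<\kappa\}$ and treating the dynamics as unreflected cannot give the upper bound for all $s>0$. Relatedly, you never address the second (and in fact principal) difficulty: the rates $\beta_j$ vanish on parts of $\partial O$, so your ``Cram\'er tilting jump type by jump type'' produces terms $f(\mu^j,\beta_j(\psi^\ell_t))$ that are $+\infty$ whenever $\mu^j>0$ and $\beta_j(\psi^\ell_t)=0$, and the inequality $\mathbb{P}_z(\cdots)\le\exp\{-N(I_T(\psi^\ell)-\eta/2)\}$ is empty unless you can guarantee $\beta_j(\psi^\ell_t)$ is bounded below.

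The paper's device handles both issues simultaneously and bypasses any control of $\widetilde{\mathcal{X}}^N$: one first pushes the process strictly into the interior via $\tilde Z^N_a(t)=(1-a)\tilde Z^N(t)+az_0$, using Assumption~\ref{assumpinq}\,\ref{assump21}--\ref{assump22}, and builds the polygonal approximation $\Upsilon$ from $\tilde Z^N_a$ rather than from $\tilde Z^N$. This forces $\dist(\Upsilon_t,\partial O)\ge c_2 a$, so Assumption~\ref{assumpinq}\,\ref{assump25} gives a lower bound $\beta_j(\Upsilon_t)\ge C_a\ge\exp\{-a^{-\nu}\}$, and Assumption~\ref{assumpinq}\,\ref{assump24} ensures the ratio between the true rate at $\tilde Z^N(\ell\epsilon)$ and the shifted rate at $\Upsilon_{\ell\epsilon}$ stays close to~$1$ (Lemma~\ref{Aux}). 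One then bounds $\mathbb{P}_z(I_T(\Upsilon|\mu)>s)$ via Chebyshev with $\exp\{\alpha N I_T(\Upsilon|\mu)\}$ and an explicit computation of the conditional Poisson moments (Proposition~\ref{propaux}, Lemma~\ref{lemmaux1}), choosing $a=a(\epsilon)\to0$ slowly as $\epsilon\to0$. The reflection never has to be estimated separately: after the $a$--shift the relevant rates are bounded away from zero and the exponential-moment calculation goes through.
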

\begin{proof}
Let $\tilde{Z}^{N}_{a}(t)=(1-a)\tilde{Z}^{N}(t)+az_{0}$ then $\|\tilde{Z}^{N}-\tilde{Z}^{N}_{a}\|_{T}<c_{1}a$ and for all $c_{1}a<\delta(d-1)/d$, 
\begin{align}\label{Inequalsup1}
 \mathbb{P}_{z}(\tilde{Z}^N\in F_{\delta}^{s}(z))
 \leq\mathbb{P}_{z}\Big(\rho_{T}(\tilde{Z}^{N}_{a},\Phi(s))\geq\frac{\delta}{d}\Big).
\end{align}
We now approximate the $\tilde{Z}^{N}$ by piecewise linear paths. Let $\epsilon>0$ be such that $T/\epsilon\in\mathds{N}$. We construct a polygonal approximation of $\tilde{Z}^{N}_{a}$ defined for all $t\in [\ell\epsilon,(\ell+1)\epsilon[$ by
\begin{equation*}
  \Upsilon_{t}=\Upsilon^{a,\epsilon}_{t}=\tilde{Z}^{N}_{a}(\ell\epsilon)\frac{(\ell+1)\epsilon-t}{\epsilon}+\tilde{Z}^{N}_{a}((\ell+1)\epsilon)\frac{t-\ell\epsilon}{\epsilon}.
\end{equation*}
Since $\{\|\tilde{Z}^{N}_{a}-\Upsilon\|_{T}<\frac{\delta}{2d}\}\cap\{\rho_{T}(\tilde{Z}^{N}_{a},\Phi_{z}(s))\geq\frac{\delta}{d}\}\subset\{\rho_{T}(\Upsilon,\Phi(s))\geq\frac{\delta}{2d}\}$,
\begin{align}\label{major2}
  \mathbb{P}_{z}\Big(\rho_{T}(\tilde{Z}^{N}_{a},\Phi_{z}(s))\geq\frac{\delta}{d}\Big)  &\leq\mathbb{P}_{z}\Big(\rho_{T}(\Upsilon,\Phi_{z}(s))\geq\frac{\delta}{2d}\Big) +\mathbb{P}_{z}\Big(\|\tilde{Z}^{N}_{a}-\Upsilon\|_{T}\geq\frac{\delta}{2d}\Big) \nonumber\\
  &\leq\mathbb{P}_{z}(I_{T}(\Upsilon)\geq s)+\mathbb{P}_{z}\Big(\|\tilde{Z}^{N}_{a}-\Upsilon\|_{T}\geq\frac{\delta}{2d}\Big).
\end{align}
We now  bound $\mathbb{P}_{z}(I_{T}(\Upsilon)\geq s)$. For any choice $\mu\in\mathcal{A}_{d}(\Upsilon)$ we have $I_{T}(\Upsilon)\leq I_{T}(\Upsilon|\mu)$ and
\begin{equation}\label{Inequalsup2}
  \mathbb{P}_{z}(I_{T}(\Upsilon)\geq s)\leq\mathbb{P}_{z}(I_{T}(\Upsilon|\mu)\geq s).
\end{equation}
Let $\{\mu^{j}_{t}, 1\leq j\leq k\}\in\mathcal{A}_{d}(\Upsilon)$ be constant on the intervals $[\ell\epsilon,(\ell+1)\epsilon[$ and equal to
\begin{equation}\label{mu1}
  \mu^{j}_{t}=\frac{1-a}{N\epsilon}\Big[P_{j}\Big(N\int_{0}^{(\ell+1)\epsilon}\beta_{j}(\tilde{Z}^{N}(s))ds\Big)-P_{j}\Big(N\int_{0}^{\ell\epsilon}\beta_{j}(\tilde{Z}^{N}(s))ds\Big)\Big].
\end{equation}
 To control the change of $\Upsilon$ over the intervals of length $\epsilon$, we will use the constant $g(\epsilon)$ from  \eqref{g(eps)} and consider the collection of events $B=\{B_{\epsilon}\}_{\epsilon>0}$ defined by
\begin{equation*}
 B_{\epsilon}=\bigcap_{\ell=0}^{T/\epsilon-1}B_{\epsilon}^{\ell},\
\text{with }
  B_{\epsilon}^{\ell}=\Big\{\sup_{\ell\epsilon\leq t_{1},t_{2}\leq (\ell+1)\epsilon}|\tilde{Z}^{N}_{i}(t_{1})-\tilde{Z}^{N}_{i}(t_{2})|\leq g(\epsilon)\quad\text{for}\quad i=1,...,d\Big\}.
\end{equation*}
We have
\begin{equation}\label{major1}
  \mathbb{P}_{z}(I_{T}(\Upsilon|\mu)>s)\leq\mathbb{P}_{z}(\{I_{T}(\Upsilon|\mu)>s\}\cap B_{\epsilon})+\mathbb{P}_{z}(B^{c}_{\epsilon}).
\end{equation}
Combining  \eqref{Inequalsup1}, \eqref{major2},  \eqref{Inequalsup2} and \eqref{major1}, we deduce that
\begin{equation}\label{Inequalsup}
 \mathbb{P}_{z}(\tilde{Z}^{N}\in F_{\delta}^{s}(z))\leq\mathbb{P}_{z}(\{I_{T}(\Upsilon|\mu)>s\}\cap B_{\epsilon})+\mathbb{P}_{z}(B^{c}_{\epsilon})+\mathbb{P}_{z}\Big(\|\tilde{Z}^{N}_{a}-\Upsilon\|_{T}\geq\frac{\delta}{2d}\Big).
\end{equation}
The next Lemmas give appropriate upper bounds for the three terms in the right side of \eqref{Inequalsup}. 
The proof of the first one relies upon Lemma \ref{le17}.
\begin{lemma}
For any $s>0$ there exists $\epsilon_{s}>0$, $N_{0}\in\mathbb{N}$ and $K>0$ such that
\begin{equation}\label{ineqsup2} 
\mathbb{P}_{z}(B_{\epsilon}^{c})+\mathbb{P}_{z}(\|\tilde{Z}^{N}_{a}-\Upsilon\|_{T}>\delta/2d)<2\frac{dkT}{\epsilon}\exp\{-sN\} 
\end{equation}
for all $\epsilon<\epsilon_{0}$, $N>N_{0}$ and any $z\in \bar O$. 
\end{lemma}

\begin{proof}
It is enough to show that the two terms on the left of \eqref{ineqsup2} have $\frac{dkT}{\epsilon}\exp\{-sN\}$ as upper bound.
For the first terms in that left side, we first remark that for all $j=1,...,k$ and $\ell=1,...,T/\epsilon$ we can write
\begin{equation*}
 \int_{0}^{(\ell+1)\epsilon}\beta_{j}(\tilde{Z}^{N}_{s})ds<\int_{0}^{\ell\epsilon}\beta_{j}(\tilde{Z}^{N}_{s})ds+\sigma\epsilon.
\end{equation*}
Moreover, we have
\begin{equation*}
 B_{\epsilon}^{c}=\bigcup_{i=1,...,d}\bigcup_{\ell=1,...,T/\epsilon}\Big\{\sup_{(\ell-1)\epsilon\leq t_{1},t_{2}\leq \ell\epsilon}|\tilde{Z}^{N}_{i}(t_{1})-\tilde{Z}^{N}_{i}(t_{2})|>g(\epsilon)\Big\}.
\end{equation*}
Thus
\begin{equation*}
 \mathbb{P}_{z}(B_{\epsilon}^{c})\leq\sum_{i=1}^{d}\sum_{\ell=1}^{T/\epsilon}\mathbb{P}_{z}\Big\{\sup_{(\ell-1)\epsilon\leq t_{1},t_{2}\leq \ell\epsilon}|\tilde{Z}^{N}_{i}(t_{1})-\tilde{Z}^{N}_{i}(t_{2})|>g(\epsilon)\Big\}.
\end{equation*}
Using \eqref{reflexionsde2} we have
\begin{align*}
& \sup_{(\ell-1)\epsilon\leq t_{1},t_{2}\leq \ell\epsilon}|\tilde{Z}^{N}_{i}(t_{1})-\tilde{Z}^{N}_{i}(t_{2})| \\
 &=\sup_{(\ell-1)\epsilon\leq t_{1},t_{2}\leq \ell\epsilon}\Big|\sum_{j}\frac{h^{i}_{j}}{N}\Big[\int_{0}^{t_{1}}\big(1-\mathbf{1}_{\{\tilde{Z}^{N}(s)+\frac{h_{j}}{N}\not\in\bar{O}\}}\big)dQ^{N,j}_{s}-\int_{0}^{t_{2}}\big(1-\mathbf{1}_{\{\tilde{Z}^{N}(s)+\frac{h_{j}}{N}\not\in\bar{O}\}}\big)dQ^{N, j}_{s}\Big]\Big|\\
 &\leq\frac{1}{N}\sum_{j}\Big|\int_{(\ell-1)\epsilon}^{\ell\epsilon}\big(1-\mathbf{1}_{\{\tilde{Z}^{N}(s)+\frac{h_{j}}{N}\not\in\bar{O}\}}\big)dQ^{N, j}_{s}\Big|\\
 &\leq\frac{1}{N}\sum_{j}\Big|P_{j}\Big(N\int_{0}^{(\ell-1)\epsilon}\beta_{j}(\tilde{Z}^{N}(s))ds+N\sigma\epsilon\Big)-P_{j}\Big(N\int_{0}^{(\ell-1)\epsilon}\beta_{j}(\tilde{Z}^{N}(s))ds\Big)\Big|\\
 &\leq\frac{1}{N}\sum_{j}Z_{j},
\end{align*}
where $Z_{j}$  $j=1,...,k$ are Poisson random variables with the mean $N\sigma\epsilon$.
Then 
\begin{equation*}
 \mathbb{P}_{z}\Big\{\sup_{(\ell-1)\epsilon\leq t_{1},t_{2}\leq \ell\epsilon}|\tilde{Z}^{N}_{i}(t_{1})-\tilde{Z}^{N}_{i}(t_{2})|>g(\epsilon)\Big\}\leq k\mathbb{P}^{N}(N^{-1}Z_{1}>g(\epsilon)/k)
\end{equation*}
Since it is a Poisson random variable with mean $N\sigma\epsilon$, $Z_1$ is the sum of $N$ iid Poisson random variable with mean $\sigma\epsilon$. Hence,
from lemma \ref{le17}, for each $s>0$ there exist  constants $K>0$, $\epsilon^1_{s}>0$ and $N_{0}\in\mathbb{N}$ such that
\begin{equation*}
 \mathbb{P}_{z}\Big\{\sup_{(\ell-1)\epsilon\leq t_{1},t_{2}\leq \ell\epsilon}|\tilde{Z}^{N}_{i}(t_{1})-\tilde{Z}^{N}_{i}(t_{2})|>g(\epsilon)\Big\}\leq k\exp\{-sN\}
\end{equation*}
for all $\epsilon<\epsilon^1_{s}$ and $N>N_{0}$. Consequently
\begin{equation*}
\mathbb{P}_{z}(B_{\epsilon}^{c})<\frac{dkT}{\epsilon}\exp\{-sN\}, 
\end{equation*}
which is the first half of \eqref{ineqsup2}. We now establish the second half.

 We first show that there exist  $\epsilon_{s}\leq\epsilon^1_{s}$ and $N_{0}\in\mathbb{N}$ such that for all $\epsilon<\epsilon_{s}$, $N>N_{0}$ and any $z\in \bar O$,
\begin{equation*}
\mathbb{P}_{z}(\|\tilde{Z}^{N}_{a}-\Upsilon\|_{T}>\delta/2d)<\frac{dkT}{\epsilon}\exp\{-sN\}.
\end{equation*}
We deduce from \eqref{reflexionsde2} that for all $t\in[\ell\epsilon,(\ell+1)\epsilon[$
\begin{align*}\label{major3}
 |\tilde{Z}^{N,a}_{i}(t)-\Upsilon^{i}_{t}|&\leq \frac{t-\ell\epsilon}{\epsilon}|\tilde{Z}_{i}^{N,a}((\ell+1)\epsilon)-\tilde{Z}_{i}^{N,a}(t)|+\frac{(\ell+1)\epsilon-t}{\epsilon}|\tilde{Z}_{i}^{N,a}(t)-\tilde{Z}_{i}^{N,a}(\ell\epsilon)| \\
 &\leq \frac{t-\ell\epsilon}{\epsilon}\sum_{j}\frac{1}{N}|\int_{t}^{(\ell+1)\epsilon}\big(1-\mathbf{1}_{\{\tilde{Z}^{N}(s)+\frac{h_{j}}{N}\not\in\bar{O}\}}\big)dQ^{N, j}_{s}| \\
 &+\frac{(\ell+1)\epsilon-t}{\epsilon}\sum_{j}\frac{1}{N}|\int_{\ell\epsilon}^{t}\big(1-\mathbf{1}_{\{\tilde{Z}^{N}(s)+\frac{h_{j}}{N}\not\in\bar{O}\}}\big)dQ^{N, j}_{s}| \\
 &\leq\sum_{j}\frac{1}{N}\Big|P_{j}\Big(N\int_{0}^{(\ell+1)\epsilon}\beta_{j}(\tilde{Z}^{N}(s))ds\Big)-P_{j}\Big(N\int_{0}^{\ell\epsilon}\beta_{j}(\tilde{Z}^{N}(s))ds\Big)\Big|\\
 &\leq\frac{1}{N}\sum_{j}\Big|P_{j}\Big(N\int_{0}^{\ell\epsilon}\beta_{j}(\tilde{Z}^{N}(s))ds+N\sigma\epsilon\Big)-P_{j}\Big(N\int_{0}^{\ell\epsilon}\beta_{j}(\tilde{Z}^{N}(s))ds\Big)\Big|\\
 &\leq\frac{1}{N}\sum_{j}Z_{j},
\end{align*}
where the $Z_{j}$ are as in the first part of the proof. Let $\epsilon^2_{0}$ be the largest $\epsilon$ such that $\delta/kd>g(\epsilon)$. Then we have from Lemma \ref{le17} that
for all $\epsilon<\epsilon_{0}=\min\{\epsilon^1_{0},\epsilon^2_{0}\}$ and $N>N_{0}$
\begin{align*}
 \mathbb{P}_{z}(\|\tilde{Z}^{N,a}-\Upsilon\|_{T}>\delta)&\leq\mathbb{P}_{z}\Big(\bigcup_{i=1}^{d}\{|\tilde{Z}^{N,a}_{i}(t)-\Upsilon^{i}_{t}|>\frac{\delta}{d}\}\quad\text{for some}\quad t\in[0,T]\Big)\\
 & \leq \frac{T}{\epsilon}\max_{0\leq \ell\leq T/\epsilon-1}\mathbb{P}_{z}\Big(\bigcup_{i=1}^{d}\{|\tilde{Z}^{N,a}_{i}(t)-\Upsilon^{i}_{t}|>\frac{\delta}{d}\}\quad\text{for some}\quad t\in[\ell\epsilon,(\ell+1)\epsilon[\Big)\\
 &\leq \frac{dkT}{\epsilon}\mathbb{P}_{z}(Z_{1}/N>\delta/kd)\leq \frac{dkT}{\epsilon} \exp\{-sN\}.
\end{align*}
The result follows.
\end{proof}

 It remains to upper bound $\mathbb{P}_{z}(\{I_{T}(\Upsilon|\mu)>s\}\cap B_{\epsilon})$ from the right hand side of \eqref{Inequalsup}. We first deduce from Chebyshev's inequality that  for all $0<\alpha<1$
\begin{equation}\label{cheby}
  \mathbb{P}_{z}(\{I_{T}(\Upsilon|\mu)>s\}\cap B_{\epsilon})\leq\frac{\mathbb{E}_{z}(\exp\{\alpha N I_{T}(\Upsilon|\mu)\}\mathfrak{1}_{B_{\epsilon}})}{\exp\{\alpha N s\}}.
\end{equation}
In order to conclude the proof of Proposition \ref{upperb}, all we need to do is to get an upper bound of the numerator in the right hand side of \eqref{cheby} of the type $\exp\{N\delta\}$, with $\delta$ arbitrarily small. This will be achieved in Lemma \ref{lemmaux1}. Note that the ideas behind this proof come from \cite{dolgoarshinnykhsample} and the proof of Theorem 3.2.2, chapter 3 in \cite{freidlin2012random}. We first establish 

\begin{lemma}\label{lemmaux}
For all $0<\alpha<1$, $j=1,...,k$ and $\ell=0,...,T/\epsilon-1$, there exists $W_{j}$ which conditionally upon $\mathcal{F}^{N}_{\ell\epsilon}$ are mutually independent Poisson random variables with respective mean $N\epsilon\beta^{j}_{\ell}=N\epsilon(\beta_{j}(\tilde{Z}^{N}(\ell\epsilon))+C d g(\epsilon))$, such that if
\begin{equation*}
 \Theta_{j}^{\ell}=\exp\Big\{\alpha N \int_{\ell\epsilon}^{(\ell+1)\epsilon}f(\mu^{j}_{t}, \beta_{j}(\Upsilon_{t}))dt\Big\}\mathfrak{1}_{B_{\epsilon}^{\ell}}
\end{equation*}
and
\begin{equation*}
\Xi_{j}^{\ell}=\exp\{\alpha N\epsilon(\sigma+2Cdg(\epsilon))\}\exp\Big\{\alpha N\epsilon f\Big(\frac{(1-a)W_{j}}{\epsilon N}, \beta^{a,j}_{\ell}\Big)\Big\}
\end{equation*}
where $\beta^{a,j}_{\ell}=(\beta_{j}(\Upsilon_{\ell\epsilon})-C d g(\epsilon))_{+}$, then

\begin{equation}\label{convex1}
  \Theta_{j}^{\ell}\leq\Xi_{j}^{\ell}\quad\text{a.s}
\end{equation}
\end{lemma}

\begin{proof}
On $B^{\ell}_{\epsilon}$, if $g(\epsilon)<1$ and $t\in[\ell\epsilon, (\ell+1)\epsilon]$, we have
\begin{equation*}
  \Big|N\int_{\ell\epsilon}^{(\ell+1)\epsilon}\beta_{j}(\tilde{Z}^{N}(t))dt-N\epsilon\beta_{j}(\tilde{Z}^{N}(\ell\epsilon))\Big|\leq N\epsilon C d g(\epsilon),\quad j=1,...,k.
\end{equation*}
If $\mu^{j}_{t}$, $j=1,...,k$ are defined by \eqref{mu1}, we have
\begin{equation}\label{twoz1z2}
  0\leq\mu^{j}_{\ell\epsilon}\leq\frac{(1-a)W_{j}}{\epsilon N}\quad\text{a.s.},
\end{equation}
where 
\begin{align*}
  W_{j} &= P_{j}\Big(N\int_{0}^{\ell\epsilon}\beta_{j}(\tilde{Z}^{N}(s))ds+\epsilon N(\beta_{j}(\tilde{Z}^{N}(\ell\epsilon))+C d g(\epsilon))\Big)-P_{j}\Big(N\int_{0}^{\ell\epsilon}\beta_{j}(\tilde{Z}^{N}(s))ds\Big).
\end{align*}
Moreover on the event $B^{\ell}_{\epsilon}$  for all $t\in[\ell\epsilon, (\ell+1)\epsilon]$,
 if $\beta^{a,j}_{\ell}=(\beta_{j}(\Upsilon_{\ell\epsilon})-C d g(\epsilon))_{+}$,
\begin{equation*}
  \beta^{a,j}_{\ell}\leq\beta_{j}(\Upsilon_{t})\leq\beta^{a,j}_{\ell}+2Cdg(\epsilon).
\end{equation*}
Hence, again on $B^{\ell}_{\epsilon}$
\begin{equation*}
  f(\mu^{j}_{t}, \beta_{j}(\Upsilon_{t}))\leq f(\mu^{j}_{t}, \beta_{\ell}^{a,j})+2C d g(\epsilon).
\end{equation*}
In fact,
\begin{align*}
  f(\mu^{j}_{t}, \beta_{j}(\Upsilon_{t}))&=\mu_{t}^{j}\log\frac{\mu_{t}^{j}}{\beta_{j}(\Upsilon_{t})}-\mu_{t}^{j}+\beta_{j}(\Upsilon_{t})\\
&\leq\mu_{t}^{j}\log\frac{\mu_{t}^{j}}{\beta_{\ell}^{a,j}}-\mu_{t}^{j}+\beta_{\ell}^{a,j}+2Cdg(\epsilon)+\mu_{t}^{j}\log\frac{\beta_{\ell}^{a,j}}{\beta_{j}(\Upsilon_{t})} \\
&\leq f(\mu^{j}_{t}, \beta_{\ell}^{a,j})+2C d g(\epsilon)\quad \text{since}\quad\log\frac{\beta_{\ell}^{a,j}}{\beta_{j}(\Upsilon_{t})}<0 .
\end{align*}

As $\mu^{j}_{t}=\mu^{j}_{\ell\epsilon}$ is constant over the interval $[\ell\epsilon, (\ell+1)\epsilon[$, we deduce that on $B^{\ell}_{\epsilon}$
\begin{equation}\label{ineqexpf}
  \exp\Big\{\alpha N \int_{\ell\epsilon}^{(\ell+1)\epsilon}f(\mu^{j}_{t}, \beta_{j}(\Upsilon_{t}))dt\Big\} \leq\exp\{\alpha N \epsilon f(\mu^{j}_{\ell\epsilon},\beta_{\ell}^{a,j})+2\alpha N C d \epsilon g(\epsilon)\}.
\end{equation}
\eqref{convex1} follows  from \eqref{twoz1z2}, \eqref{ineqexpf} and the convexity of $f(\nu,\omega)$ in $\nu$.
\end{proof}

 The next Proposition gives us an upper bound for the conditional expectation of the right hand side of the inequality \eqref{convex1}.
\begin{proposition}\label{propaux}
Let $a=h(\epsilon)=\Big[-\log g^{1/2}(\epsilon)\Big]^{-\frac{1}{\nu}}$ where $\nu$ is the constant in the Assumption \ref{assumpinq} \ref{assump25}. For all $0<\alpha<1$ there exist $\epsilon_{\alpha}$, $K_{\alpha}$ and $\tilde{K}$ such that for all $\epsilon\leq\epsilon_{\alpha}$ we have, with $g(\epsilon)$ defined by \eqref{g(eps)}, for any $z\in \bar O$,
\begin{align*}
  \mathbb{E}_{z}\Big(\exp\Big\{\alpha N\epsilon f\Big(\frac{(1-a)W_{j}}{\epsilon N}, \beta^{a,j}_{\ell}\Big)\Big\}|\mathcal{F}^{N}_{\ell\epsilon}\Big)\Big\} 
  \leq K_{\alpha}\exp\{N\epsilon\tilde{K}(1-\alpha+2h(\epsilon)+2dg(\epsilon))\}.
\end{align*}

\end{proposition}

\begin{proof}
 Conditionally upon $\mathcal{F}^{N}_{\ell\epsilon}$, $W_{j}$ is a Poisson random variable with mean $N\epsilon\beta_{\ell}^{j}$. Moreover we have, see the definitions of $\beta^{a,j}_{\ell}$ and $\beta^{j}_{\ell}$ in the statement of Lemma \ref{lemmaux},
\begin{equation*}
   |\beta_{\ell}^{a,j}-\beta_{\ell}^{j}|\leq \tilde{C}(a+2d g(\epsilon)).
\end{equation*}
With $\tilde{\epsilon}=\epsilon/(1-a)$ and $\tilde{\alpha}=(1-a)\alpha$, we have
\begin{align}\label{Inequal1}
 & \mathbb{E}_{z}\Big(\exp\Big\{\alpha N\epsilon f\Big(\frac{(1-a)W_{j}}{\epsilon N}, \beta^{a,j}_{\ell}\Big)\Big\}|\mathcal{F}^{N}_{\ell\epsilon}\Big)=\mathbb{E}_{z}\Big(\exp\Big\{\alpha N\epsilon f\Big(\frac{W_{j}}{\tilde{\epsilon} N}, \beta^{a,j}_{\ell}\Big)\Big\}|\mathcal{F}^{N}_{\ell\epsilon}\Big) \nonumber\\
  &=\sum_{m\geq0}\exp\Big\{\alpha N\epsilon f\Big(\frac{m}{\tilde{\epsilon} N}, \beta^{a,j}_{\ell}\Big)\Big\}\frac{(N\epsilon\beta_{\ell}^{j})^{m}\exp\{-N\epsilon\beta_{\ell}^{j}\}}{m!} \nonumber\\
  &=\sum_{m\geq0}\exp\Big\{\alpha N\epsilon \Big(\frac{m}{\tilde{\epsilon} N}\log\Big(\frac{m}{\tilde{\epsilon} N\beta^{a,j}_{\ell}}\Big)-\frac{m}{\tilde{\epsilon} N} +\beta^{a,j}_{\ell} \Big)\Big\}\frac{(N\epsilon\beta_{\ell}^{j})^{m}\exp\{-N\epsilon\beta_{\ell}^{j}\}}{m!} \nonumber\\
  &\leq\exp\{N\epsilon\tilde{C}(a+2d g(\epsilon))\}\sum_{m\geq0}\frac{m^{\tilde{\alpha}m}\exp\{-\tilde{\alpha}m\}}{m!} (N\epsilon\beta^{a,j}_{\ell})^{m(1-\tilde{\alpha})}\Big(\frac{\beta^{j}_{\ell}}{\beta^{a,j}_{\ell}}\Big)^{m} \exp\{-N\epsilon\beta^{a,j}_{\ell}(1-\alpha)\} \nonumber\\
  &\leq\exp\{N\epsilon C_{1}(a+2d g(\epsilon))\}\sum_{m\geq0}\frac{m^{\tilde{\alpha}m}\exp\{-\tilde{\alpha}m\}}{m!} (N\epsilon\beta^{a,j}_{\ell})^{m(1-\tilde{\alpha})}\Big(\frac{\beta^{j}_{\ell}}{\beta^{a,j}_{\ell}}\Big)^{m} \exp\{-N\epsilon\beta^{a,j}_{\ell}(1-\tilde{\alpha})\}.
\end{align}

 Since the function $v(x)=x^{m(1-\tilde{\alpha})}\exp\{-2x(1-\tilde{\alpha})\}$ reaches its maximum at $x=m/2$, 
\begin{equation*}
  (N\epsilon\beta^{a,j}_{\ell})^{m(1-\tilde{\alpha})} \exp\{-2N\epsilon\beta^{a,j}_{\ell}(1-\tilde{\alpha})\}\leq\Big(\frac{m}{2}\Big)^{m(1-\tilde{\alpha})}\exp\{-m(1-\tilde{\alpha})\}.
\end{equation*}
Thus
\begin{align}\label{Inequal2}
&\sum_{m\geq0}\frac{m^{\tilde{\alpha}m}\exp\{-\tilde{\alpha}m\}}{m!} (N\epsilon\beta^{a,j}_{\ell})^{m(1-\tilde{\alpha})}\Big(\frac{\beta^{j}_{\ell}}{\beta^{a,j}_{\ell}}\Big)^{m} \exp\{-N\epsilon\beta^{a,j}_{\ell}(1-\tilde{\alpha})\} \nonumber \\
&\leq \exp\{N\epsilon\beta^{a,j}_{\ell}(1-\tilde{\alpha})\}\sum_{m\geq0}\frac{m^{m} \exp\{-m\}}{m!}\Big(\frac{\beta^{j}_{\ell}/\beta^{a,j}_{\ell}}{2^{(1-\tilde{\alpha})}}\Big)^{m}
\end{align}
We shall show (see the proof of Lemma \ref{Aux} below)  that
for all $0<\alpha<1$ there exists $\epsilon_{\alpha}>0$ such that for all $\epsilon<\epsilon_{\alpha}$ ,
\begin{equation*}
\frac{\beta_{\ell}^{j}}{\beta_{\ell}^{a,j}}<2^{(1-\alpha)/2}<2^{(1-\tilde{\alpha})/2}.
\end{equation*}
Then for $\epsilon$ small enough we have
\begin{equation}\label{Inequal3}
\exp\{N\epsilon\beta_{\ell}^{a,j}(1-\tilde{\alpha})\}\sum_{m\geq0}\frac{m^{m}e^{-m}}{m!}\Big(\frac{\beta_{\ell}^{j,q}/\beta_{\ell}^{a,j}}{2^{(1-\tilde{\alpha})}}\Big)^{m}  
\leq e^{N\epsilon\theta(1-\tilde{\alpha})} K_{\alpha}
\end{equation}
since the above series converges. The proposition follows from \eqref{Inequal1}, \eqref{Inequal2} and \eqref{Inequal3}. 
\end{proof}

\begin{lemma}\label{Aux}
For all $0<\alpha<1$ there exists $\epsilon_{\alpha}>0$ such that for all $\epsilon<\epsilon_{\alpha}$ 
\begin{equation*}
\frac{\beta_{\ell}^{j}}{\beta_{\ell}^{a,j}}<2^{(1-\alpha)/2}<2^{(1-\tilde{\alpha})/2},
\end{equation*}
with again $\tilde{\alpha}=(1-a)\alpha$, a as in the statement of Proposition \ref{propaux}.
\end{lemma}

\begin{proof}
We have
\begin{equation*}
 \frac{\beta^{j}_{\ell}}{\beta^{a,j}_{\ell}}\leq\frac{\beta_{j}(\tilde{Z}^{N}(\ell\epsilon))+Cdg(\epsilon)}{(\beta_{j}(\tilde{Z}^{N,a}(\ell\epsilon))-Cdg(\epsilon))^+}
 \end{equation*}
If $\beta_{j}(\tilde{Z}^{N}(\ell\epsilon))<\lambda_{1}$ we have using the Assumptions \ref{assumpinq} \ref{assump24} and  \ref{assump25},
\begin{align*}
 \frac{\beta^{j}_{\ell}}{\beta^{a,j}_{\ell}}&\leq\frac{\beta_{j}(\tilde{Z}^{N,a}(\ell\epsilon))+Cdg(\epsilon)}{(\beta_{j}(\tilde{Z}^{N,a}(\ell\epsilon))-Cdg(\epsilon))^+} \\
 &\leq\frac{C_{a}+Cdg(\epsilon)}{(C_{a}-Cdg(\epsilon))^+}\leq\frac{1+\frac{Cdg(\epsilon)}{g^{1/2}(\epsilon)}}{\Big(1-\frac{Cdg(\epsilon)}{g^{1/2}(\epsilon)}\Big)^+}\rightarrow1\quad{as}\quad\epsilon\rightarrow0.
 \end{align*}
If $\beta_{j}(\tilde{Z}^{N}(\ell\epsilon))\geq\lambda_{1}$, we have
\begin{align*}
 &\frac{\beta^{j}_{\ell}}{\beta^{a,j}_{\ell}}\leq\frac{\beta_{j}(\tilde{Z}^{N}(\ell\epsilon))+Cdg(\epsilon)}{(\beta_{j}(\tilde{Z}^{N}(\ell\epsilon))-C\bar{C}h(\epsilon)-Cdg(\epsilon))^+}\\
 &\leq\frac{\lambda_{1}+Cdg(\epsilon)}{(\lambda_{1}-C\bar{C}h(\epsilon)-Cdg(\epsilon))^+}\rightarrow1\quad\text{as}\quad\epsilon\rightarrow0.
 \end{align*}
Then there exists $\epsilon_{\alpha}$ such that $\frac{\beta_{\ell}^{j}}{\beta_{\ell}^{a,j}}<2^{(1-\alpha)/2}<2^{(1-\tilde{\alpha})/2}$ for all $\epsilon<\epsilon_{\alpha}$.
\end{proof}

 The next lemma gives us an upper bound for the quantity $\mathbb{E}_{z}\Big(\exp\{\alpha NI_{T}(\Upsilon|\mu)\}\mathbf{1}_{B_{\epsilon}}\Big)$.
\begin{lemma}\label{lemmaux1}
For all $0<\alpha<1$ there exist $\epsilon_{\alpha}$, $K_{\alpha}$ and $\tilde{K}_{1}$ such that for all $\epsilon\leq\epsilon_{\alpha}$, we have the following inequality
\begin{equation}\label{ineqsup1}
\mathbb{E}_{z}\Big(\exp\{\alpha N I_{T}(\Upsilon|\mu)\}\mathbf{1}_{B_{\epsilon}}\Big)\leq (2K_{\alpha})^{\frac{kT}{\epsilon}}\exp\{kNT\tilde{K}_{1}(1-\alpha+h(\epsilon)+4dg(\epsilon))\},
\end{equation}
for any $z\in \bar O$.
\end{lemma}
\begin{proof}
We first deduce from Lemma \ref{lemmaux} and Proposition \ref{propaux} 
\begin{equation*}
 \mathbb{E}_{z}(\Theta_{j}^{\ell}|\mathcal{F}^{N}_{\ell\epsilon})\leq\mathbb{E}_{z}(\Xi_{j}^{\ell}|\mathcal{F}^{N}_{\ell\epsilon})\leq K_{\alpha}\exp\{N\epsilon\tilde{K}_{1}(1-\alpha+2h(\epsilon)+4dg(\epsilon))\}.
\end{equation*}

 Moreover, the $\Xi^{\ell}_{j}$, $j=1,...,k$ are conditionnally independent given $\mathcal{F}^{N}_{\ell\epsilon}$. So we can take successively the conditional expectations with 
respect to  $\mathcal{F}^{N}_{(\frac{T}{\epsilon}-1)\epsilon}$, $\mathcal{F}^{N}_{(\frac{T}{\epsilon}-2)\epsilon}$,...,$\mathcal{F}^{N}_{\epsilon}$, to conclude that for all $0<\alpha<1$ and $\epsilon<\epsilon_{\alpha}$,
\begin{align*}
\mathbb{E}_{z}\Big(\exp\{\alpha NI_{T}(\Upsilon|\mu)\}\mathbf{1}_{B_{\epsilon}}\Big)
&=\mathbb{E}_{z}\Big(\mathbb{E}_{z}\Big(\prod_{\ell=0}^{\frac{T}{\epsilon}-1}\prod_{j=1}^{k}\Theta_{j}^{\ell}|\mathcal{F}^{N}_{(\frac{T}{\epsilon}-1)\epsilon}\Big)\Big)\leq\mathbb{E}_{z}\Big(\mathbb{E}_{z}\Big(\prod_{\ell=0}^{\frac{T}{\epsilon}-1}\prod_{j=1}^{k}\Xi_{j}^{\ell}|\mathcal{F}^{N}_{(\frac{T}{\epsilon}-1)\epsilon}\Big)\Big)\\
&\leq\mathbb{E}_{z}\Big(\prod_{\ell=0}^{\frac{T}{\epsilon}-2}\prod_{j=1}^{k}\Xi_{j}^{\ell}\mathbb{E}_{z}\Big(\prod_{j=1}^{k}\Xi_{j}^{\frac{T}{\epsilon}-1}|\mathcal{F}^{N}_{(\frac{T}{\epsilon}-1)\epsilon}\Big)\Big)\\
&=(K_{\alpha})^{\frac{kT}{\epsilon}}\exp\{kNT\tilde{K}_{1}(1-\alpha+h(\epsilon)+4dg(\epsilon))\}.
\end{align*}
The result follows  \end{proof}

 We now conclude the proof of Proposition \ref{upperb}.
The upper bound of the first term in the right side of \eqref{Inequalsup} is obtained by combining \eqref{cheby} and \eqref{ineqsup1}. Indeed, for all $0<\alpha<1$, $\epsilon<\min\{\epsilon_{0},\epsilon_{\alpha},\epsilon_{1}\}$ and $a=h(\epsilon)=\Big[-\log g^{1/2}(\epsilon)\Big]^{-\frac{1}{\nu}}$,
\begin{equation}\label{Inequalsup3}
\mathbb{P}_{z}(\{I_{T}(\Upsilon|\mu)>s\}\cap B_{\epsilon})\leq (K_{\alpha})^{\frac{kT}{\epsilon}}\exp\{kNT\tilde{K}_{1}(1-\alpha+h(\epsilon)+4dg(\epsilon))\}
 \times\exp\{-\alpha N s\}
\end{equation}

 Combining \eqref{Inequalsup}, \eqref{ineqsup2} and \eqref{Inequalsup3},
 we have for all $\delta>0$, $\alpha$, $\epsilon$ and $a$ as above,
\begin{equation*}
 \mathbb{P}_{z}(\tilde{Z}^N\in F_{\delta}^{s}(z)) \leq (K_{\alpha})^{\frac{kT}{\epsilon}}\exp\{kNT\tilde{K}_{1}(1-\alpha+h(\epsilon)+4dg(\epsilon))\}\times\exp\{-\alpha N s\}+\frac{2dTk}{\epsilon}\exp\{-sN\}.
\end{equation*}
Finally, we choose $1-\alpha$ and $\epsilon$ small enough to ensure that $kT\tilde{K}_{1}(1-\alpha+h(\epsilon)+4dg(\epsilon))<\eta/4$
and $(1-\alpha)s<\eta/4$. We also take $N$ large enough so that $kT\log(K_{\alpha})/N\epsilon<\eta/4$ and $\log(2dkT/\epsilon)/N<\eta/4$, hence we deduce that for any $z\in A$,
\begin{equation*}
 \mathbb{P}_{z}(\tilde{Z}^N\in F_{\delta}^{s}(z))\leq\exp\{-N(s-\eta)\}.
\end{equation*}
\end{proof}

 We now establish the upper bound.
\begin{theorem}\label{upperbound11}
For any closed subset $F$ of $D_{T,A}$, $z\in A$
\begin{equation}\label{upper}
\underset{y\to z}{\limsup_{N\to\infty}}\frac{1}{N}\log\mathbb{P}_{y}(\tilde{Z}^N\in F)\leq -\inf_{\phi\in F,\phi_{0}=z}I_{T}(\phi).
\end{equation}
\end{theorem}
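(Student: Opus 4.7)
The plan is to deduce the upper bound from Proposition \ref{upperb} by the standard Freidlin--Wentzell scheme, combining it with the lower-semicontinuity type statement of Lemma \ref{semiconz} and the compactness of the level sets $\{I_{T}\leq s\}$ (goodness of the rate function, inherited from \cite{Sam2017}).

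Let $s_{0}:=\inf_{\phi\in F,\phi_{0}=z}I_{T}(\phi)$. The case $s_{0}=0$ is trivial since the left hand side of \eqref{upper} is always $\le 0$, and if $s_{0}=\infty$ it is enough to establish the bound with $s_{0}$ replaced by an arbitrary finite $s$ and then let $s\to\infty$. Assume therefore $0<s_{0}\le\infty$, fix $s\in(0,s_{0})$ and $\eta\in(0,s)$. By Lemma \ref{semiconz} applied to $F$, there exists $\epsilon_{1}>0$ with
\[
\inf_{|y-z|<\epsilon_{1}}\inf_{\phi\in F,\phi_{0}=y}I_{T}(\phi)>s,
\]
so in particular $F\cap\Phi_{y}(s)=\emptyset$ whenever $|y-z|<\epsilon_{1}$.

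The main step, which I expect to be the chief obstacle, is to produce a single $\delta>0$, uniform in such $y$, so that $F\subset F_{\delta}^{s}(y)$. I would argue by contradiction: if no such $\delta$ existed, we could find sequences $y_{n}\to z$, $\phi_{n}\in F$ and $\psi_{n}\in\Phi_{y_{n}}(s)$ with $\|\phi_{n}-\psi_{n}\|_{T}\to 0$. Since $\{I_{T}\le s\}$ is compact, a subsequence of $(\psi_{n})$ converges in $D_{T,\bar{O}}$ to some $\psi$, necessarily with $\psi_{0}=z$, and by lower semicontinuity $I_{T}(\psi)\le s$, hence $\psi\in\Phi_{z}(s)$. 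Along that subsequence $\phi_{n}\to\psi$ as well, and $F$ being closed forces $\psi\in F\cap\Phi_{z}(s)$, contradicting $s<s_{0}$.

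With such a uniform $\delta>0$ at our disposal, Proposition \ref{upperb} provides an $N_{0}$ such that for every $N\ge N_{0}$ and every $y$ with $|y-z|<\epsilon_{1}$,
\[
\mathbb{P}_{y}(\tilde{Z}^{N}\in F)\le\mathbb{P}_{y}(\tilde{Z}^{N}\in F_{\delta}^{s}(y))\le\exp\{-N(s-\eta)\}.
\]
Taking $\tfrac{1}{N}\log$, passing to $\limsup_{N\to\infty}$ and then to $\limsup_{y\to z}$ yields the bound $-(s-\eta)$. Letting finally $\eta\downarrow 0$ and $s\uparrow s_{0}$ gives \eqref{upper}.
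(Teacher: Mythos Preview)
Your argument is correct and follows essentially the same Freidlin--Wentzell scheme as the paper's proof: both combine Lemma \ref{semiconz}, the compactness of the level sets of $I_{T}$, and Proposition \ref{upperb}. The paper phrases the separation step slightly differently, defining $W(\epsilon)=\{\phi\in F:|\phi_{0}-z|\le\epsilon\}$ and $U(\epsilon)=\bigcup_{|y-z|\le\epsilon}\Phi_{y}(s)$, invoking compactness of $U(\epsilon)$ directly (via Proposition 4.21 of \cite{Kratz2014}) and the positive distance between a compact set and a disjoint closed set, whereas you extract the same $\delta$ by a sequential contradiction argument; these are equivalent.

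One small inaccuracy: when you negate the existence of a uniform $\delta$, the resulting sequence $(y_{n})$ only satisfies $|y_{n}-z|<\epsilon_{1}$, not $y_{n}\to z$. The compactness step then gives a subsequential limit $y^{\ast}$ with $|y^{\ast}-z|\le\epsilon_{1}$, and to close the contradiction you need the conclusion of Lemma \ref{semiconz} on the \emph{closed} ball. This is harmless (either work with closed balls from the start, or shrink $\epsilon_{1}$ slightly), but it is worth stating cleanly.
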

\begin{proof}
We first assume that $\inf_{\phi\in F,\phi_{0}=z}I_{T}(\phi)<\infty$, and let $\gamma>0$ be arbitrary. By Lemma \ref{semiconz}, 
there exists $\epsilon^{\gamma}>0$ such that for all $\epsilon<\epsilon^{\gamma}$,
\begin{equation*}
y\in A, |y-z|\leq \epsilon\implies\inf_{\phi\in F, \phi_{0}=y}I_{T}(\phi)\geq\inf_{\phi\in F,\phi_{0}=z}I_{T}(\phi)-\gamma.
\end{equation*}
For $\epsilon<\epsilon^{\gamma}$, let $s=\inf_{\phi\in F, \phi_{0}=z}I_{T}(\phi)-\gamma$,
\begin{equation*}
W(\epsilon)=\{\phi\in F: |\phi_{0}-z|\leq\epsilon\}\quad \text{and} \quad
U(\epsilon)=\bigcup_{y\in A, |y-z|\leq\epsilon}\Phi_{y}(s).
\end{equation*}
 $W(\epsilon)$ is closed in $D_{T,A}$ and does not intersect the set $U(\epsilon)$, which is compact, see Proposition 4.21 in \cite{Kratz2014}. By the Hahn-Banach theorem,
\begin{equation*}
\delta^{\epsilon}=\inf_{\phi\in W(\epsilon)}\inf_{\psi\in U(\epsilon)}\|\phi-\psi\|_{T}>0.
\end{equation*}
We deduce that for all $\eta>0$ and any $y\in A$ with $|y-z|\leq\epsilon$, there exists $N_{0}\in\mathbb{N}$ such that for all $N>N_{0}$, using \eqref{loexpupper} for the second inequality,
\begin{align*}
\mathbb{P}_{y}(\tilde{Z}^N\in F)&=\mathbb{P}_{y}(\tilde{Z}^N\in W(\epsilon)) \\
&\leq\mathbb{P}_{y}(\tilde{Z}^N\in F_{\delta^{\epsilon}}^{s}(y))\\
&\leq \exp\{-N(s-\eta)\}.
\end{align*}
Consequently, for any $N>N_{0}$ (where $N_{0}$ depends upon $\epsilon$, $\gamma$ and $\eta$) and $|y-z|<\epsilon^{\gamma}$,
\begin{equation*}
\frac{1}{N}\log\mathbb{P}_{y}(\tilde{Z}^N\in F)\leq -\inf_{\phi\in F,\phi_{0}=z}I_{T}(\phi)+\gamma+\eta.
\end{equation*}
We deduce that
\begin{equation*}
\underset{y\to z}{\limsup_{N\to\infty}}\frac{1}{N}\log\mathbb{P}_{y}(\tilde{Z}^N\in F)\leq -\inf_{\phi\in F,\phi_{0}=z}I_{T}(\phi)+\gamma+\eta,
\end{equation*}
for any $\nu, \eta>0$, hence the result in the case $\inf_{\phi\in F,\phi_{0}=z}I_{T}(\phi)<\infty$. Otherwise, for any $s>0$, there exists $\epsilon>0$ such that the distance between $W(\epsilon)$ and $U(\epsilon)$ is greater that some $\delta^\epsilon>0$. Following the above argument, we deduce that
\begin{equation*}
\underset{y\to z}{\limsup_{N\to\infty}}\frac{1}{N}\log\mathbb{P}_{y}(\tilde{Z}^N\in F)\leq -s+\eta,
\end{equation*}
for any $s,\eta>0$, from which the result follows.
\end{proof}

 We deduce as in \cite{Dembo2009} Corollary 5.6.15,
\begin{corollary}
For any open subset $F$ of $D_{T,A}$ and any compact subset $K$ of $A$,
  \begin{equation*}
\limsup_{N\to\infty}\frac{1}{N}\log\sup_{z\in K}\mathbb{P}_{z}(\tilde{Z}^N\in F)\leq-\inf_{z\in K}\inf_{\phi\in F, \phi_{0}=z}I_{T}(\phi).
\end{equation*}
\end{corollary}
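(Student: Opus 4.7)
The plan is to reduce the claim to the pointwise upper bound of Theorem~\ref{upperbound11} via a routine compactness argument (I read ``open'' as ``closed'' in the statement, since an upper bound over open sets is meaningless here; compare Corollary 5.6.15 of \cite{Dembo2009}, which is the model result).

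First I would fix $\eta > 0$ arbitrary and unpack Theorem~\ref{upperbound11}: for each $z \in K$ the theorem provides an $\epsilon_z > 0$ and an $N_z \in \mathbb{N}$ such that, for all $N \geq N_z$ and all $y \in A$ with $|y - z| < \epsilon_z$,
\begin{equation*}
\frac{1}{N}\log \mathbb{P}_y(\tilde{Z}^N \in F) \leq -\inf_{\phi \in F,\, \phi_0 = z} I_T(\phi) + \eta \leq -\inf_{z' \in K}\inf_{\phi \in F,\, \phi_0 = z'} I_T(\phi) + \eta.
\end{equation*}

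Next I would use compactness of $K$: the open balls $\{B(z, \epsilon_z) : z \in K\}$ cover $K$, so there is a finite subcover $\{B(z_i, \epsilon_{z_i})\}_{i=1}^m$. Setting $N_0 := \max_{1 \leq i \leq m} N_{z_i}$, every $z \in K$ lies in some $B(z_i, \epsilon_{z_i})$, so the displayed inequality applies with $y = z$ uniformly for all $N \geq N_0$ and $z \in K$. Taking the supremum over $z \in K$ and then $\limsup_{N \to \infty}$ yields
\begin{equation*}
\limsup_{N \to \infty}\frac{1}{N}\log \sup_{z \in K} \mathbb{P}_z(\tilde{Z}^N \in F) \leq -\inf_{z \in K}\inf_{\phi \in F,\, \phi_0 = z} I_T(\phi) + \eta,
\end{equation*}
and letting $\eta \to 0$ closes the argument.

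There is no substantive obstacle beyond what is already handled in Theorem~\ref{upperbound11}; the content of this step is simply that the $z$-dependence of the constants $\epsilon_z$ and $N_z$ furnished by that theorem is absorbed by passing to a finite subcover of the compact set $K$, so a single $N_0$ works uniformly, while the pointwise rates are automatically dominated by the global infimum $\inf_{z \in K}\inf_{\phi \in F,\, \phi_0 = z} I_T(\phi)$.
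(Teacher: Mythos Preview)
Your proposal is correct and is precisely the standard compactness argument behind Corollary 5.6.15 in \cite{Dembo2009}, which is all the paper invokes here (no independent proof is given). Your observation that ``open'' should read ``closed'' is also well taken: the upper bound of Theorem~\ref{upperbound11} is stated for closed sets, and that is what is needed to feed into the covering argument.
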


\section{Applications}\label{sec7}
 In this section we present two models of infectious diseases for which the hypothesis \ref{assump3} is verified. Indeed, the characteristic boundary (that we note $\widetilde{\partial O}$) between the basin of attraction of the disease-free equilibrium $\bar z$ and the basin of attraction of the endemic equilibrium $z^{\ast}$ is the global stable manifold of the saddle point $\tilde z$ (another point of endemic equilibrium). We know from \cite{guckenheimer2013nonlinear} (Theorem 1.3.1, page 13) that this boundary is a curve of class $C^{\infty}$. 

\subsection{A model with vaccination $(SIV)$}

\subsubsection{Description of the model} \label{example1}
\begin{figure} 
\centering
\begin{picture} (290,140)
\linethickness{0.4mm}

\put(135,0){\line(1,0){40}}
\put(135,0){\line(0,1){40}}
\put(175,0){\line(0,1){40}}
\put(135,40){\line(1,0){40}}

\put(60,90){\line(1,0){40}}
 \put(60,90){\line(0,1){40}}
\put(100,90){\line(0,1){40}}
\put(60,130){\line(1,0){40}}

\put(200,90){\line(1,0){40}}
\put(200,90){\line(0,1){40}}
\put(240,90){\line(0,1){40}}
\put(200,130){\line(1,0){40}}

\put(190,95){\vector(-1,0){80}}
\put(110,125){\vector(1,0){80}}
\put(70,85){\vector(3,-4){55}}
\put(180,10){\vector(3,4){55}}
\put(130,40){\vector(-3,4){30}}

\put(250,110){\vector(1,0){40}}
\put(45,95){\vector(-1,0){40}}
\put(5,125){\vector(1,0){40}}
\put(180,5){\vector(1,0){40}}

\put(72,100){\huge{$S$}}
\put(215,100){\huge{$I$}}
\put(144,10){\huge{$V$}}

\put(120,129){\small{$\beta S^N I^N/N$}}
\put(120,115){\small{(infection rate)}}

\put(120,99){\small{$\gamma I^N$}}
\put(120,85){\small{(recovery rate)}}

\put(65,50){\small{$\eta S^N$}}
\put(20,40){\small{(vaccination rate})}

\put(115,65){\small{$\theta V^N$}}
\put(123,55){\small{(loss of vaccination)}}

\put(15,129){\small{$\mu N$}}
\put(5,115){\small{(birth rate)}}

\put(15,99){\small{$\mu S^N$}}
\put(5,85){\small{(death rate)}}

\put(265,114){\small{$\mu I^N$}}
\put(220,50){\small{$\chi \beta V^N I^N/N \,\, (\chi \in [0,1])$}}

\put(215,40){\small{(infection of vaccinated)}}
\put(200,10){\small{$\mu V^N$}}

\end{picture}
\caption{Diagram of the $S^NI^NV^N$ model.}
\label{figSIV}
\end{figure}
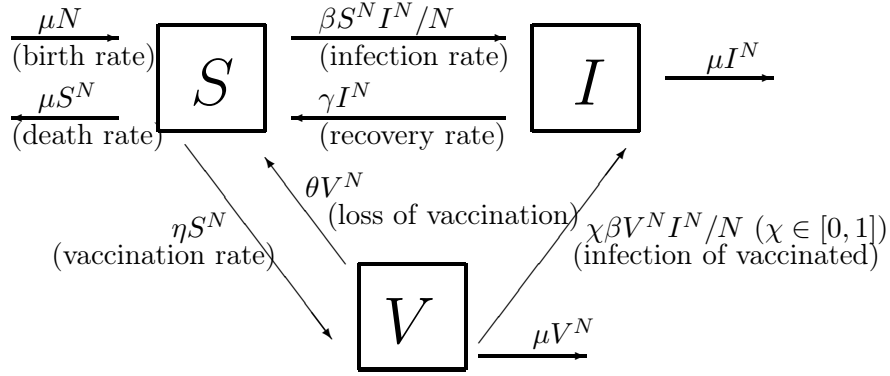

 We consider the so-called $S^NI^NV^N$ model: it is a model with vaccination and demography. 
In this model, we suppose that a population with constant size $N$ is  divided into three compartments namely:  $S^N(t)$, $I^N(t)$ and $V^N(t)$ are respectively the number of susceptible, infectious  and vaccinated individuals at time $t$. Figure~\ref{figSIV} gives us a good graphical representation of the disease transmission for this model. We assume that susceptibles are vaccinated at rate $\eta$ and lose their protection at rate $\theta$; the vaccine is not perfect but decreases the rate of infection by a factor $\chi\in[0,1]$. While each infected infects a given susceptible at rate $\beta S^{N}(t)/N$, it infects a given vaccinated individual at rate $\chi\beta V^{N}/N$, where $\beta=r\kappa$, $\kappa$ being the mean number of individuals met by one infected per unit time, $r$ (resp $r\chi$) is the probability that an encounter between an infected and a susceptible (resp a vaccinated) results in an infection. 

 The proportions of infectious $i^N(t)=\frac{I^N(t)}{N}$ and vaccinated $v^N(t)=\frac{V^N(t)}{N}$ can be described as follows. Let $P_{1},P_{2},P_{3},P_{4}$ $,P_{5},P_{6},P_{7}$ be iid standard Poisson processes:
\begin{align*}
   i^{N}(t)&=i^{N}(0)+\frac{1}{N}P_1\Big(N\beta\int_0^t i^{N}(s)(1-i^{N}(s)-v^N(s))ds\Big)+\frac{1}{N}P_2\Big(N\chi\beta\int_0^t i^{N}(s)v^N(s)ds\Big) \nonumber\\
   &-\frac{1}{N}P_3\Big(N\gamma\int_0^t i^{N}(s)ds\Big)-\frac{1}{N}P_6\Big(N\mu\int_0^t i^{N}(s)ds\Big) ,\\
    v^N(t)&=v^N(0)-\frac{1}{N}P_2\Big(N\chi\beta\int_0^t i^{N}(s)v^N(s)ds\Big)-\frac{1}{N}P_4\Big(N\theta\int_0^t i^{N}(s)v^N(s)ds\Big) \nonumber\\
    &+\frac{1}{N}P_5\Big(N\eta\int_0^t (1-i^{N}(s)-v^N(s))ds\Big)-\frac{1}{N}P_7\Big(N\mu\int_0^t v^{N}(s)ds\Big) \nonumber.
\end{align*}
If we let 
\begin{align*}
&h_{1}= (1,0)^\top, \quad \beta_{1}(z)=\beta z_{1} (1-z_{1}-z_{2}), \\
&h_{2}= (1,-1)^\top, \quad \beta_{2}(z)=\chi\beta z_{1} z_{2}, \\
&h_{3}= (-1,0)^\top, \quad \beta_{3}(z)= \gamma z_1, \\
&h_{4}= (0,-1)^\top, \quad \beta_{4}(z)=\theta z_{2}, \\
&h_{5}= (0,1)^\top, \quad \beta_{5}(z)=\eta (1-z_{1}- z_{2})\\ 
&h_{6}= (-1,0)^\top, \quad \beta_{6}(z)= \mu z_{1}, \\
&h_{7}= (0,-1)^\top, \quad \beta_{7}(z)= \mu z_{2},
\end{align*}
 our epidemic model takes the form
\begin{equation}\label{PoissonSIV}
Z^{N,z_0}(t)=\frac{\left[N z_0\right]}{N}+\frac{1}{N}\sum_{j=1}^{7}h_{j}P_{j}\Big(N\int_{0}^{t}\beta_{j}(Z^{N,z_0}(s))ds\Big),
\end{equation}
where  $Z^{N,z_0}(t)=(i^N(t), v^N(t))$.

 It is easy to show that for all $T>0$ the process $(Z^{N,z_0}(t))_{0\le t\le T}$ defined by \eqref{PoissonSIV} converges  almost surely and uniformly on $[0,T]$ as $N$ tends to $\infty$ to the solution $(i(t), v(t))$ of the following $ODE$ 
\begin{equation}\label{ODESIV}
\begin{cases}
      \frac{dz_1}{dt}(t)=(\beta-\mu-\gamma) z_1(t)-\beta(1-\chi) z_1(t) z_2(t)-\beta z_1^{2}(t) \\
      \frac{dz_2}{dt}(t)=\eta-\eta z_1(t)-(\eta+\mu+\theta) z_2(t)-\chi\beta z_1(t) z_2(t).
\end{cases}
\end{equation}
where $i(t)$, $v(t)$ denote respectively the proportion of infectious and vaccinated at time $t$. This deterministic $SIV$ model \eqref{ODESIV} is the two dimensional version of the $SIV$ model studied in \cite{Kribs2000}  by Kribs-Zaleta and Velasco-Hern\'andez (see Theorem 1). They show that if $(\mu+\theta+\chi \eta)^{2}<(\mu+\gamma)\chi(1-\chi)\eta$ and $\beta_{1}<\beta<\beta_{0}$  where
\begin{align*}
  \beta_{0}&=(\mu+\gamma)\frac{\mu+\theta+\eta}{\mu+\theta+\chi\eta}  \\
    \beta_{1}&=\mu+\gamma-\frac{\mu+\theta+\chi\eta}{\sigma}+\frac{2}{\chi}\sqrt{(\mu+\gamma)\sigma(1-\chi)\eta}, 
\end{align*}
then two endemic equilibria $z^{\ast}=(z^{\ast}_{1},z^{\ast}_{2})$, $\tilde{z}=(\tilde{z}_{1},\tilde{z}_{2})$ exist, one of which namely $z^{\ast}$ is locally stable while $\tilde{z}$ is unstable. These two equilibria are completed with the disease free equilibrium $\bar{z}$ $\big(\bar {z}_{1}=0,\bar {z}_{2}=\frac{\eta}{\mu+\theta+\eta}\big)$ which is locally stable. The figure \ref{FigSIV} shows the basin of attraction $O$ of the endemic equilibrium $z^{\ast}$. It is delimited by the boundary $\widetilde{\partial O}$ and it contains the point $z^{\ast}$. 
The first components of the equilibria $z^{\ast}$ and $\tilde{z}$ are the solutions of the equation \ref{coordendemic1} and the second components is given by equation \ref{coordendemic2} below
\begin{equation}\label{coordendemic1}
\begin{cases}
      D_{1}x^{2}+D_{2}x+ D_{3}=0 \\
      \text{where}\quad D_{1}=-\beta\chi, D_{2}=\chi(\beta-\mu-\gamma)-(\mu+\theta+\chi\eta) \\
      \text{and}\quad D_{3}=(\mu+\theta+\eta)(1-\frac{\mu+\gamma}{\beta})-(1-\chi)\eta.
\end{cases}
\end{equation}

\begin{equation}\label{coordendemic2}
z_{2}=\frac{\eta(1-z_{1})}{\mu+\theta+\eta+\beta\chi z_{1}}.
\end{equation}

\subsubsection{The boundary $\widetilde{\partial O}$ in the $SIV$ model}
The aim of this section is to establish that the assumptions \ref{assumpinq} \ref{assump21} and \ref{assumpinq} \ref{assump22} are satisfied by the model with vaccination of  \cite{Kribs2000} and with $O$ the basin of attraction of the equilibrium $z^\ast$.

We define
\begin{itemize}
  \item $(\mathcal{D}_{1.1})$ the straight line whose equation reads $(\beta-\mu-\gamma)-\beta(1-\chi) z_{2}-\beta z_{1}=0$ i.e
  \begin{equation*}
  z_{2}=-\frac{\beta}{\beta(1-\chi)}z_{1}+\frac{\beta-\mu-\gamma}{\beta(1-\chi)},
\end{equation*} 
\item $(\mathcal{H}_{1})$ the curve having the equation $\eta-\eta z_{1}-(\eta+\mu+\theta) z_{2}-\chi\beta z_{1}z_{2}=0$ i.e
\begin{equation*}
 z_{2}=\frac{\eta-\eta z_{1}}{\chi\beta z_{1}+(\eta+\mu+\theta)},
\end{equation*}
\end{itemize}

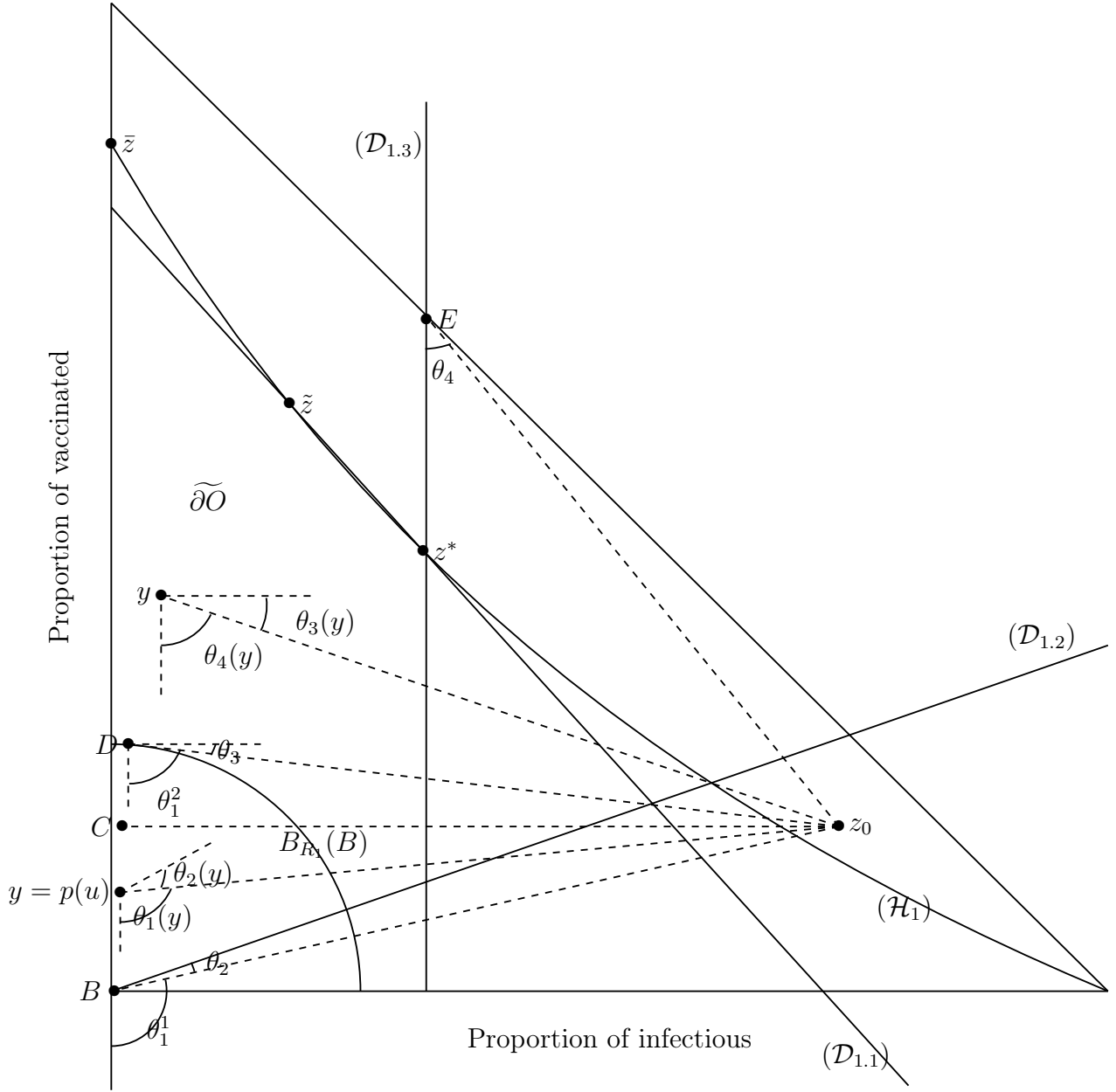
\begin{figure}
\begin{center}
\begin{tikzpicture}[scale=15, line width=0.7pt]
\draw (0,0) -- (1,0);
\draw (0,-0.1) -- (0,1);
\draw (1,0) -- (0,1);
\draw[draw=red, line width=1.1pt] plot file {DataSIV.txt};
\draw (0,0.8571428) node[right] {$\bar{z}$};
\draw (0,0.8571428) node {$\bullet$};
\draw (0.17880587,0.59453669) node[right] {$\tilde{z}$};
\draw (0.17880587,0.59453669) node {$\bullet$};
\draw (0.312861035,0.44558654) node[right] {$z^{\ast}$};
\draw (0.312861035,0.44558654) node {$\bullet$};

\draw (0.5,-0.05) node{Proportion of infectious};
\draw (-0.05,0.5) node[rotate=90]{Proportion of vaccinated};

\draw (6:0.8) node {$(\mathcal{H}_{1})$};
\draw [domain=0:1] plot(\x,{(0.3-0.3*\x)/(0.36*\x+0.35)});
\draw (355:0.75) node {$(\mathcal{D}_{1.1})$};
\draw [domain=0:0.8] plot(\x,{(2.57-3.6*\x)/3.24});

\draw (21:1) node {$(\mathcal{D}_{1.2})$};
\draw (0,0) -- (1,0.35);
\draw (72:0.9) node {$(\mathcal{D}_{1.3})$};
\draw (0.316,0) -- (0.316,0.9);

\draw (0.73,1/6) node[right] {$z_{0}$};
\draw (0.73,1/6) node {$\bullet$};
\draw[every to/.style={append after command={[draw,dashed]}}] (0.73,1/6) to (0.316,0.683);
\draw[every to/.style={append after command={[draw,dashed]}}] (0.73,1/6) to (0,0);

\draw (79:0.51) node {$\widetilde{\partial O}$};

\draw (0,-1/18) arc (270:372:1/18); 
\draw (320:1/16) node {$\theta_{1}^{1}$};

\draw (0.0834,0.02) arc (375:385:1/18); 
\draw (375:1/9) node {$\theta_{2}$};

\draw (0.315,0.65) arc (272:290:1/12); 
\draw (62:0.71) node {$\theta_{4}$};

\draw (0,0) node[left] {$B$};
\draw (0.003,0) node {$\bullet$};

\draw (0.316,0.68) node[right] {$E$};
\draw (0.316,0.68) node {$\bullet$};

\draw (0.011,1/6) node[left] {$C$};
\draw (0.011,1/6) node {$\bullet$};
\draw[every to/.style={append after command={[draw,dashed]}}] (0.73,1/6) to (0.011,1/6);

\draw (0.017,0.25) node[left] {$D$};
\draw (0.017,0.25) node {$\bullet$};
\draw[every to/.style={append after command={[draw,dashed]}}] (0.73,1/6) to (0.017,0.25);
\draw[every to/.style={append after command={[draw,dashed]}}] (0.017,0.25) to (0.15,0.25);
\draw (0.018,0.21) arc (270:338:1/18); 
\draw (73:1/5) node {$\theta_{1}^{2}$};
\draw[every to/.style={append after command={[draw,dashed]}}] (0.017,0.25) to (0.017,0.18);
\draw (0.1,0.239) arc (333:339:1/9); 
\draw (64:0.27) node {$\theta_{3}$};

\draw[every to/.style={append after command={[draw,dashed]}}] (0.73,1/6) to (0.009,0.1);
\draw (0.009,0.1) node[left] {$y=p(u)$};
\draw (0.009,0.1) node {$\bullet$};
\draw[every to/.style={append after command={[draw,dashed]}}] (0.009,0.04) to (0.009,0.1);
\draw (0.009,0.07) arc (270:337:1/18); 
\draw (55:0.09) node {$\theta_{1}(y)$};
\draw[every to/.style={append after command={[draw,dashed]}}] (0.1,0.15) to (0.009,0.1);
\draw (0.05,0.105) arc (337:356:1/18); 
\draw (52:0.15) node {$\theta_{2}(y)$};

\draw[every to/.style={append after command={[draw,dashed]}}] (0.73,1/6) to (0.05,0.4);
\draw (0.05,0.4) node[left] {$y$};
\draw (0.05,0.4) node {$\bullet$};
\draw[every to/.style={append after command={[draw,dashed]}}] (0.05,0.3) to (0.05,0.4);
\draw (0.05,0.35) arc (270:334:1/18); 
\draw (70:0.36) node {$\theta_{4}(y)$};
\draw[every to/.style={append after command={[draw,dashed]}}] (0.2,0.4) to (0.05,0.4);
\draw (0.15,0.365) arc (333:368:1/18); 
\draw (60:0.43) node {$\theta_{3}(y)$};

\draw (35:0.26) node {$B_{R_{1}}(B)$};
\draw (1/4,0) arc (0:90:1/4);

\end{tikzpicture}
\end{center}
\caption{The characteristic boundary $\widetilde{\partial O}$ in the $SIV$ model}
\label{FigSIV}
\end{figure}

 In order to obtain a parametrization of the characteristic boundary on the interval [0, 2], we make the change of variable $u=t/(1+t)$ and the $ODE$ \eqref{ODESIV} can be re-written
\begin{equation}\label{ODESIV1}
\begin{cases}
      \frac{dy_1}{du}(u)=\frac{1}{(1-u)^2}[(\beta-\mu-\gamma) y_1(u)-\beta(1-\chi) y_1(u) y_2(u)-\beta y_1^{2}(u)] \\
      \frac{dy_2}{dt}(u)=\frac{1}{(1-u)^2}[\eta-\eta y_1(u)-(\eta+\mu+\theta) y_2(u)-\chi\beta y_1(u) y_2(u)].
\end{cases}
\end{equation}

\begin{remarks}
\begin{itemize}
  \item The part of characteristic boundary $\widetilde{\partial O}_{1}$ which starts from the point $B$ (different to the origin) to the unstable endemic equilibrium $\tilde{z}$ of the dynamical system \eqref{ODESIV} has as parametrization the solution $p^{1}(u)=(p^{1}_{1}(u),p^{1}_{2}(u))_{0\leq u<1}$ of \eqref{ODESIV1}  with the initial condition $B$. 
  \item The part of characteristic boundary $\widetilde{\partial O}_{2}$ which starts from the extremity $E$ to the unstable endemic equilibrium $\tilde{z}$ of the dynamical system \eqref{ODESIV} (see Figure \ref{FigSIV}) has as parametrization the solution $p^{2}(u)=(p^{2}_{1}(u),p^{2}_{2}(u))_{0\leq u<1}$ of the  dynamical system defined by \eqref{ODESIV1} with as initial condition the point $E$.  \\
Thus the characteristic boundary admits as parametric curve $(p(u))_{0\leq u\leq 2}$ defined by
 \begin{equation*}
 p(u)=
\begin{cases}
      p^{1}(u) \quad \text{if}\quad 0\leq u<1\\
      \tilde{z}  \quad\text{if}\quad u=1  \\
      p^{2}(2-u) \quad\text{if} \quad 1< u\leq 2.
\end{cases}
\end{equation*} 
  \item As the tangent to the boundary $\widetilde{\partial O}$ at the origin $B$ is vertical, by the continuity there exist a ball $B_{R_{1}}(B)$ and a constant $\nu>0$ such that for all $(p_{1}(u),p_{2}(u))\in \widetilde{\partial O}\cap B_{R_{1}}(B)$, 
    \begin{equation*}
  \frac{\dot{p}_{2}(u)}{\dot{p}_{1}(u)}>\nu.
    \end{equation*}
\end{itemize}
In all what follows, $\mathcal{D}_{1.2}$ will be the line having the equation $z_{2}=\nu z_{1}$.
\end{remarks}
From these remarks we deduce that for all $t\in [0, 2]$, $\dot{p}_{1}(u)\geq0$ and $\dot{p}_{2}(u)\geq0$ since the first part of the characteristic boundary is below both $(\mathcal{D}_{1.1})$ and $\mathcal{H}_{1}$ and the second part $\widetilde{\partial O}_{2}$ is above both $(\mathcal{D}_{1.1})$ and $\mathcal{H}_{1}$. 

 Now we choose the point $z_{0}\in O$ such that the following conditions are satisfied
\begin{itemize}
  \item $z_{0}$ is above $(\mathcal{D}_{1.1})$ and $(\mathcal{H}_{1})$,
  \item $z_{0}$ is below  $(\mathcal{D}_{1.2})$,
  \item $z_{0}$ is at the right side of $(\mathcal{D}_{1.3})$,
  \item its second coordinate is smaller than that the point of $\widetilde{\partial O}$ at distance $R_1$ to $B$.
\end{itemize}
It clear that such a point exists.

 We now verify  the assumptions \ref{assumpinq} \ref{assump21} and \ref{assumpinq} \ref{assump22} through the proof of the following Lemma
\begin{lemma}
 The assumption \ref{assumpinq} \ref{assump21} is satisfied and there exists  $\theta\in]0,\pi[$ such that  for all $y\in\partial O$ and $a\in]0, 1[$,
\begin{equation}\label{assump222}
\dist(y^{a}, \partial O)\geq a\times\sin(\theta)\times \inf_{v\in \widetilde{\partial O}}|v-z_{0}|.
\end{equation} 
where $y^{a}=y+a(z_0-y)$ and $z_0$ is chosen above. 
\end{lemma}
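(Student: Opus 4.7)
The plan is to verify Assumption \ref{assumpinq}(1) (star-shapedness of $O$ with respect to $z_0$) and then to deduce \eqref{assump222} from a uniform angular transversality between rays from $z_0$ and the boundary $\partial O$.

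For the star-shapedness, I decompose $\partial O=\widetilde{\partial O}\cup(\partial O\cap\partial A)$. Along $\widetilde{\partial O}_1$ the parametrization $p^1(u)$ has both coordinates nondecreasing by the remarks preceding the lemma, and $\widetilde{\partial O}_2$ has an analogous monotonicity with reversed parametrization. Combined with the four constraints on $z_0$ (above $(\mathcal{D}_{1.1})$ and $(\mathcal{H}_1)$, below $(\mathcal{D}_{1.2})$, right of $(\mathcal{D}_{1.3})$, and with second coordinate below the point of $\widetilde{\partial O}$ at distance $R_1$ from $B$), the polar angle of $p(u)$ as seen from $z_0$ is itself monotone in $u$; hence each ray from $z_0$ meets $\widetilde{\partial O}$ at exactly one point. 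The remaining pieces of $\partial O\cap\partial A$ are straight segments, and the same angular restrictions on $z_0$ rule out multiple intersections.

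For \eqref{assump222}, let $d_0:=\inf_{v\in\widetilde{\partial O}}|v-z_0|>0$ and, for each $y\in\partial O$, let $\psi(y)$ denote the angle between the chord $y-z_0$ and the tangent line to $\partial O$ at $y$ (a one-sided tangent at the finitely many corner points). By the $C^\infty$-smoothness of $\widetilde{\partial O}$ cited from \cite{guckenheimer2013nonlinear}, $\psi(\cdot)$ is continuous on each smooth piece of $\partial O$. Positivity at the endpoints $B$ and $E$ of $\widetilde{\partial O}$ follows from the position of $z_0$ relative to $(\mathcal{D}_{1.2})$, $(\mathcal{D}_{1.3})$ and $B_{R_1}(B)$, combined with the tangent directions of $\widetilde{\partial O}$ there (vertical at $B$, as noted in the remarks; analogously at $E$). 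Positivity at the corner points $C$, $D$ of $\partial O\cap\partial A$ is immediate from the angular constraints on $z_0$. I set $\theta:=\inf_{y\in\partial O}\psi(y)$ and shrink $\theta$ if necessary so that $\sin\theta\leq(\inf_{v\in\partial O}|v-z_0|)/d_0$; this provides a candidate $\theta\in(0,\pi)$.

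With $\theta$ fixed as above, \eqref{assump222} follows by plane trigonometry. For $y\in\partial O$ and $y'\in\partial O$ realizing $\dist(y^a,\partial O)$: if $y'=y$ then $|y^a-y'|=a|z_0-y|\geq a\sin(\theta)\,d_0$ by the choice of $\theta$; otherwise $\phi:=\angle yz_0y'>0$ by star-shapedness, and writing both $y$ and $y'$ in polar coordinates about $z_0$ the transversality bound $\psi\geq\theta$ translates into the slope estimate $|r'(\hat u)|/r(\hat u)\leq\cot\theta$ along $\partial O$. Minimizing
\[
|y^a-y'|^2=\bigl((1-a)r(\hat u_y)-r(\hat u')\cos\phi\bigr)^2+r(\hat u')^2\sin^2\phi
\]
subject to this slope constraint yields $|y^a-y'|\geq a\sin(\theta)\,r(\hat u_y)\geq a\sin(\theta)\,d_0$. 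The main difficulty is precisely this last trigonometric reduction: a careful comparison of $y'$ with its projection on the tangent line at $y$ is needed to avoid losing the factor $a$ when $y'$ is close to $y$ along $\partial O$, and the cleanest route is the polar-coordinate minimization above, which rests on the slope bound $|r'|/r\leq\cot\theta$ that is equivalent to the transversality $\psi(\cdot)\geq\theta$ established in the previous paragraph.
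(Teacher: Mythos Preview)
Your approach diverges from the paper's and the final step contains a genuine error. The paper does not work with the angle $\psi(y)$ between the chord $y-z_0$ and the \emph{tangent} at $y$; instead it subdivides $\widetilde{\partial O}$ into three arcs $B$--$C$, $C$--$D$, $D$--$E$ (here $C,D$ are way-points on $\widetilde{\partial O}$, not corners of $\partial A$ as you write) and on each arc measures the angle between the chord and \emph{fixed} reference directions (vertical, horizontal, the line of slope $\nu$). These directions are chosen so that, thanks to the global monotonicity of $p_1(u)$ and $p_2(u)$ along the whole curve, the relevant portion of $\widetilde{\partial O}$ lies on one side of the straight reference line through $y$; the distance from $y^a$ to that straight line is exactly $\sin(\theta_i(y))\,|y^a-y|$ and furnishes a lower bound for $\dist(y^a,\widetilde{\partial O})$.

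Your polar-coordinate minimization, by contrast, uses only the local constraint $|r'|/r\le\cot\theta$, and this is not enough to yield the claimed bound. The extremal curve under that constraint is the logarithmic spiral $r(\phi)=r_0e^{-\phi\cot\theta}$, which makes a constant angle $\theta$ with every radius yet curls \emph{inward} toward $z_0$. A direct check with $\theta=\pi/4$, $r_0=1$, $a=1/2$ gives a point on that spiral (near $\phi\approx0.3$) at distance about $0.30$ from $y^a$, strictly less than $a\sin\theta\,r_0\approx0.354$. So the inequality you assert after the minimization is false under the sole hypothesis $|r'|/r\le\cot\theta$; what prevents such inward curling in the SIV model is precisely the global coordinate monotonicity the paper exploits, not the pointwise transversality. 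A secondary gap: you verify $\psi>0$ only at $B$, $E$ and at what you call corners, whereas the compactness argument needs positivity at every boundary point, and star-shapedness alone does not rule out a ray from $z_0$ being tangent to $\widetilde{\partial O}$.
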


\begin{proof}
 We first note that the only part of boundary $\partial O$  on which \eqref{assump222} could fail is the boundary $\widetilde{\partial O}$. So let's check it on this one. To this end we divide the boundary $\widetilde{\partial O}$ into three parts. The first one goes from $B$ to the intersection point (namely $C$) of $\widetilde{\partial O}$ and the horizontal line passing through the point $z_{0}$. The second part goes from $C$ to the intersection point (namely $D$) of $\widetilde{\partial O}$ and $B_{R_{1}}(B)$. And the last goes from $D$ to $E$.

  On the first part of $\widetilde{\partial O}$, we fix a current point $y$ and we denote by $\theta_{1}(y)$ the angle that the segment which joins the points $z_{0}$ and $y$ makes with the vertical line passing through $y$. We also denote $\theta_{2}(y)$ the angle that make the same segment  with the line with slope $\nu$ passing through $y$. It is not very difficult to see that there exist $\theta_{1}^{1}$ and $\theta_{2}$ such that $\pi/2\leq\theta_{1}(y)\leq\theta_{1}^{1}\leq\pi$, $0<\theta_{2}\leq\theta_{2}(y)\leq\pi/2$ and then $\sin(\theta_{1}(y))\geq\sin(\theta_{1}^{1})$ and $\sin(\theta_{2}(y))\geq\sin(\theta_{2})$(see figure  \ref{FigSIV}).
 
 On the part of the  boundary $\widetilde{\partial O}$ from $C$ to $D$, there exist $\theta_{1}^{2}$  such that $0<\theta_{1}^{2}\leq\theta_{1}(y)\leq\pi/2$ and $0<\theta_{2}\leq\theta_{2}(y)\leq\pi/2$ and then $\sin(\theta_{1}(y))\geq\sin(\theta_{1}^{2})$ and $\sin(\theta_{2}(y))\geq\sin(\theta_{2})$.
  
 For the part of $\widetilde{\partial O}$ from $D$ to $E$, the segment from $z_{0}$ to $y$ makes with the horizontal line passing through the point y an angle $\theta_{3}(y)$ and with the vertical line passing through the point $y$ an angle $\theta_{4}(y)$. Moreover it is not difficult to remark (see figure  \ref{FigSIV}) that  there exist $\theta_{3}$ and $\theta_{4}$ with $0\leq\theta_{3}\leq\theta_{3}(y)\leq\pi/2$, $0<\theta_{4}\leq\theta_{4}(y)\leq\pi/2$ such that $\sin(\theta_{3}(y))\geq\sin(\theta_{3})$ and $\sin(\theta_{4}(y))\geq\sin(\theta_{4})$.

 We deduce from the above that for all $y\in\widetilde{\partial O}$, 
\begin{align*}
\dist(y^{a}, \widetilde{\partial O})&\geq  \min_{i=1,2,3,4}\sin(\theta_{i}(y))\times |y^{a}-y| \\
&=a\times\min_{i=1,2,3,4}\sin(\theta_{i}(y))\times |y-z_{0}| \\
&=a\times\min_{i=1,2,3,4}\sin(\theta_{i})\times \inf_{v\in \widetilde{\partial O}}|v-z_{0}|.
\end{align*} 
\end{proof}

\subsection{A model with two levels of susceptibility $(S_0IS_1)$}
\subsubsection{Description of the model.}
\begin{figure} 

\begin{picture} (290,140)
\linethickness{0.4mm}

\put(330,90){\line(1,0){40}}
\put(330,90){\line(0,1){40}}
\put(370,90){\line(0,1){40}}
\put(330,130){\line(1,0){40}}

\put(60,90){\line(1,0){40}}
 \put(60,90){\line(0,1){40}}
\put(100,90){\line(0,1){40}}
\put(60,130){\line(1,0){40}}

\put(200,90){\line(1,0){40}}
\put(200,90){\line(0,1){40}}
\put(240,90){\line(0,1){40}}
\put(200,130){\line(1,0){40}}

\put(110,110){\vector(1,0){80}}
\put(250,125){\vector(1,0){65}}
\put(320,95){\vector(-1,0){65}}

\put(220,90){\vector(0,-1){40}}
\put(80,90){\vector(0,-1){40}}
\put(5,110){\vector(1,0){40}}
\put(350,90){\vector(0,-1){40}}

\put(70,100){\Huge{$S_0$}}
\put(215,100){\Huge{$I$}}
\put(340,100){\Huge{$S_1$}}

\put(120,115){\small{$\beta S^N_0 I^N/N$}}
\put(120,100){\small{(infection rate)}}

\put(245,115){\small{(recovered rate})}
\put(265,130){\small{$\alpha I^N$}}

\put(15,115){\small{$\mu N$}}
\put(5,100){\small{(birth rate)}}

\put(85,75){\small{$\mu S^N_0$}}
\put(85,65){\small{(death rate)}}

\put(230,70){\small{$\mu I^N$}}
\put(265,100){\small{$\tilde{\beta} S^N_1 I/N $}}

\put(240,85){\small{(re-infection rate)}}
\put(360,70){\small{$\mu S^N_1$}}

\end{picture}
\caption{Compartmental diagram of $S^N_0IS^N_1$ model.}
\label{CompS0IS1}
\end{figure}
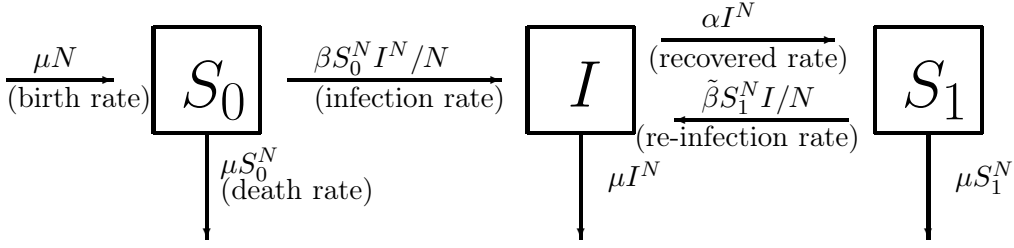

The $S^N_0I^NS^N_1$ epidemic model is a model which describes an endemic infection having two levels of susceptibility.
In this model, we suppose that a population with constant size $N$ is divided into three compartments namely: 
\begin{itemize}
  \item $S^{N}_0$-"the class of naive individuals" who are susceptibles without past infections. i.e individuals who have never been infected and may contract the infection.
  \item$ I^N$ is the class of infectious individuals
  \item $S^N_1$ is the class of susceptible individuals with at least one past infection. They are also called recovered.
\end{itemize}

 We assume that the probability of infection of an $S^{N}_0$ type (resp. $S^{N}_{1}$) individual upon contact with an infected individual is $r_0$ (resp. $r_1$). If $\kappa$ is the number of individuals which an infected individual meets per unit time, the rate of infection from $S_0$ (resp. $S_1$) compartment is $r_0\kappa S^{N}_0(t)I^{N}(t)/N$ (resp. $r_1\kappa S^{N}_0(t)I^{N}(t)/N$). Each infected individual recovers at rate $\alpha$. Each individual dies at rate $\mu$, at which he is replaced in the population by a susceptible individual. The total population size is constant equal to $N$.  The schematic representation of the disease transmission for this model is given by figure \ref{CompS0IS1}

\begin{figure}
\begin{center}
\begin{tikzpicture}[scale=15, line width=1pt]
\draw (0,0) -- (1,0);
\draw (0,0) -- (0,1);
\draw (1,0) -- (0,1);
\draw[draw=red, line width=1.2pt]  plot file {DataS0IS1.txt};
\draw (0,0) node[left] {$\bar{z}$};
\draw (0,0) node {$\bullet$};
\draw (0.0113606,0.68302727) node[right] {$\tilde{z}$};
\draw (0.0113606,0.68302727) node {$\bullet$};
\draw (0.14780605,0.81947272) node[above] {$z^{\ast}$};
\draw (0.14780605,0.81947272) node {$\bullet$};

\draw (0.5,-0.05) node{Proportion of infectious};
\draw (-0.05,0.5) node[rotate=90]{Proportion of susceptibles $S_{1}$};

\draw (71:0.9) node {$(\mathcal{H}_{2})$};
\draw [domain=0:0.1] plot(\x,{5*\x/(6*\x+0.015)});
\draw [domain=0.1:0.2] plot(\x,{5*\x/(6*\x+0.015)});
\draw [domain=0.2:0.5] plot(\x,{5*\x/(6*\x+0.015)});
\draw (76:1) node {$(\mathcal{D}_{2.1})$};
\draw [domain=0:0.4] plot(\x,{(3*\x+2.015)/3});

\draw[every to/.style={append after command={[draw,dashed]}}] (0.308,0) to (0.308,0.692);
\draw[every to/.style={append after command={[draw,dashed]}}] (0.308,0.692) to (0,0.692);
\draw (0.25,0.692) node[above] {$z_{0}$};
\draw (0.25,0.692) node {$\bullet$};

\draw (28:0.4) node {$B_{R_{3}}(B)$};
\draw (0.4746667,0) arc (0:180:1/6);
\draw (0.308,0) node[below] {$B$};
\draw (0.308,0) node {$\bullet$};

\draw (80:0.93) node {$B_{R_{2}}(F)$};
\draw (0,0.8) arc (270:315:0.2);
\draw (0,0.8) node[left] {$D$};
\draw (0,0.8) node {$\bullet$};

\draw (76:0.7) node {$(\mathcal{D}_{2.2})$};
\draw [domain=0:0.2] plot(\x,{(-2*\x+1});

\draw (48:0.35) node {$(\mathcal{D}_{2.3})$};
\draw [domain=0.2:0.308] plot(\x,{(-2.48139*\x+0.7642681});
\draw[every to/.style={append after command={[draw,dashed]}}] (0.308,0) to (0.25,0.692);

\draw[every to/.style={append after command={[draw,dashed]}}] (0.082,0.450) to (00.25,0.692);
\draw (0.082,0.450) node[left] {$y=q(u)$};
\draw (0.082,0.450) node {$\bullet$};
\draw[every to/.style={append after command={[draw,dashed]}}] (0.082,0.450) to (0.2,0.332);
\draw[every to/.style={append after command={[draw,dashed]}}] (0.082,0.450) to (0.082,0.6);
\draw (0.12,0.5) arc (50:90:1/18); 
\draw (77:0.55) node {$\theta_{1}(y)$};
\draw (0.11,0.42) arc (300:410:1/22); 
\draw (72:0.5) node {$\theta_{2}(y)$};

\draw[every to/.style={append after command={[draw,dashed]}}] (0.001,0.83) to (00.25,0.692);
\draw (0.001,0.83) node[left] {$y=q(u)$};
\draw (0.001,0.83) node {$\bullet$};
\draw[every to/.style={append after command={[draw,dashed]}}] (0.001,0.83) to (0.07,0.83);
\draw[every to/.style={append after command={[draw,dashed]}}] (0.001,0.83) to (0.02,0.786);

\draw[every to/.style={append after command={[draw,dashed]}}] (0.242,0.120) to (0.25,0.692);
\draw (0.242,0.120) node[left] {$y=q(u)$};
\draw (0.242,0.120) node {$\bullet$};
\draw[every to/.style={append after command={[draw,dashed]}}] (0.242,0.120) to (0.35,0.12);
\draw[every to/.style={append after command={[draw,dashed]}}] (0.242,0.120) to (0.19,0.2490323);

\draw[every to/.style={append after command={[draw,dashed]}}] (0,0.98) to (0.25,0.692);
\draw[every to/.style={append after command={[draw,dashed]}}] (0,0.98) to (0.1,0.88);
\draw[every to/.style={append after command={[draw,dashed]}}] (0,0.98) to (0.06,0.86);
\draw (0,0.98) node[left] {$y=q(u)$};
\draw (0,0.98) node {$\bullet$};
\draw[every to/.style={append after command={[draw,dashed]}}] (0,0.942) to (0.25,0.692);
\draw (0.001,0.942) node[left] {$E$};
\draw (0,0.942) node {$\bullet$};

\draw (0,1) node[left] {$F$};
\draw (0.001,0.998) node {$\bullet$};

\draw (0.225,0.145) node[left] {$C$};
\draw (0.225,0.145) node {$\bullet$};

\draw (0.308,0.692) node[right] {$G$};
\draw (0.308,0.692) node {$\bullet$};
\end{tikzpicture}
\end{center}
\caption{Characteristic boundary $\widetilde{\partial O}$ in the $S_{0}IS_{1}$ model}
\label{FigS0IS1}
\end{figure}

 The process described above is a continuous Markov Chain with state $(I^N(t), S^N_1(t))$. Let $P_{1},P_{2},P_{3},P_{4},P_{5}$ be the iid standard Poisson processes, we have
\begin{align}\label{modelS0IS1stoch}
     i^{N}(t) &=i^{N}(0)+\frac{1}{N}P_{1}\Big(N\int_{0}^{t}\beta i^{N}(u)(1-i^{N}(u)-s_1^{N}(u))du\Big)-\frac{1}{N}P_{2}\Big(N\int_{0}^{t}\alpha i^{N}(u)du\Big) \nonumber \\
     &-\frac{1}{N}P_{3}\Big(N\int_{0}^{t}\mu i^{N}(u)du\Big)+\frac{1}{N}P_{4}\Big(N\int_{0}^{t}r\beta i^{N}(u)s_1^{N}(u)du\Big) \\
      s_1^{N}(t)&=s_1^N(0)+ \frac{1}{N}P_{2}\Big(N\int_{0}^{t}\alpha i^{N}(u)du\Big)-\frac{1}{N}P_{4}\Big(N\int_{0}^{t}r\beta i^{N}(u)s_1^{N}(u)du\Big)-\frac{1}{N}P_{5}\Big(N\int_{0}^{t}\mu s_1^{N}(u)du\Big)  \nonumber,
\end{align}
where $i^N(t)=\frac{I^N(t)}{N}$ and $s_1(t)=\frac{S^N_1(t)}{N}$ represent respectively the proportion of infectious and the proportion of susceptibles which have already been sick.
Thus if we let  $Z^{N,z}(t)=(i^{N}(t),s_1^{N}(t))^{T}$ and
\begin{itemize}
  \item $\beta_{1}(x)=\beta x_{1}(1-x_{1}-x_{2})$, $h_{1}=(1,0)^{T}$
  \item $\beta_{2}(x)=\alpha x_{1}$, $h_{2}=(-1,1)^{T}$
  \item $\beta_{3}(x)=\mu x_{1}$, $h_{3}=(-1,0)^{T}$
  \item $\beta_{4}(x)=r\beta x_{1}x_{2}$, $h_{4}=(1,-1)^{T}$
  \item $\beta_{5}(x)=\mu x_{2}$, $h_{5}=(0,-1)^{T}$
\end{itemize}
The equation \eqref{modelS0IS1stoch} can be re-written as
\begin{equation}\label{PoissonS0IS1}
Z^{N,z_0}(t)=\frac{\left[N z_0\right]}{N}+\frac{1}{N}\sum_{j=1}^{5}h_{j}P_{j}\Big(N\int_{0}^{t}\beta_{j}(Z^{N,z_0}(s))ds\Big).
\end{equation}

 It is easy to show that for all $T>0$ the process $(Z^{N,z_0}(t))_{0\le t\le T}$ defined by \eqref{PoissonS0IS1} converges  almost surely and uniformly on $[0,T]$ as $N$ tends to $\infty$ to the solution $(i(t), s_1(t))$ of the $ODE$ 
\begin{equation}\label{ODES0IS1}
\begin{cases}
     \frac{dz_1}{dt}(t)=\beta (1-z_1(t)-z_2(t))z_1(t)-\mu z_1(t)-\alpha z_1(t)+r \beta z_1(t) z_2(t) \\
     \frac{dz_{2}}{dt}(t)=\alpha z_1(t)-\mu z_{2}(t)-r\beta z_1(t) z_{2}(t), 
\end{cases}
\end{equation}
with the initial condition $(i(0), s_1(0))=z_0$. Note that the deterministic model defined by \eqref{ODES0IS1} is the $S_0IS_1$ model studied by M.~Safan, H.~Heesterbeek, and K.~Dietz \cite{safan2006minimum}. The basic reproduction number is given by
\begin{equation}\label{nbaseS0IS1}
  R_{0}=\frac{\beta}{\alpha+\mu},
  \end{equation}
and for $r>1+\mu/\alpha$,  if we let
\begin{equation}\label{nbaseS0IS12}
  R^{\ast}_{0}=\frac{\beta^{\ast}}{\alpha+\mu}~\text{where}~\beta^{\ast}=\frac{(\sqrt{\mu(r-1)}+\sqrt{\alpha})^{2}}{r},
\end{equation}
then If the parameters values are chosen in such away that $R^{\ast}_{0}<R_{0}<1$ and $r>1+\mu/\alpha$, there exist two positive endemic equilibria ($EE$): the first one $z^{\ast}$ defined by \eqref{coordstable} which is locally asymptotically stable and the second one $\tilde{z}$ defined by \eqref{coordunstable} which is unstable. In addition the disease free equilibrium ($DFE$) $\bar{z}=(0,0)$ is locally asymptotically stable. 
\begin{equation}\label{coordstable}
 \begin{cases}
    z^{\ast}_{1}=\frac{1}{2}\Bigg(\bigg(1-\frac{1}{rR_{0}}-\frac{\mu}{(\alpha+\mu)R_{0}}\bigg)+\sqrt{\bigg(1-\frac{1}{rR_{0}}-\frac{\mu}{(\alpha+\mu)R_{0}}\bigg)^{2}+\frac{4\mu(1-\frac{1}{R_{0}})}{(\alpha+\mu)rR_{0}}}\Bigg)   \\
    z^{\ast}_{2}=\frac{\alpha z^{\ast}_{1}}{\mu+r\beta z^{\ast}_{1}}
    \end{cases}  
\end{equation}

\begin{equation}\label{coordunstable}
  \begin{cases}
    \tilde{z}_{1}&=\frac{1}{2}\Bigg(\bigg(1-\frac{1}{rR_{0}}-\frac{\mu}{(\alpha+\mu)R_{0}}\bigg)-\sqrt{\bigg(1-\frac{1}{rR_{0}}-\frac{\mu}{(\alpha+\mu)R_{0}}\bigg)^{2}+\frac{4\mu(1-\frac{1}{R_{0}})}{(\alpha+\mu)rR_{0}}}\Bigg)   \\
    \tilde{z}_{2}&=\frac{\alpha \tilde{z}_{1}}{\mu+r\beta \tilde{z}_{1}}  
   \end{cases}
\end{equation}

\subsubsection{The boundary $\widetilde{\partial O}$ in the $S_{0}IS_{1}$ model}
 In this section, we verify that the assumptions \ref{assumpinq} \ref{assump21} and \ref{assumpinq} \ref{assump22} are satisfied for the model $S^N_{0}I^NS_{1}^N$.
 On the figure \ref{FigS0IS1} we see the bassin of attraction $O$ of the equilibrium $z^{*}$ delimited by the boundary $\widetilde{\partial O}$ and containing the point $z^{*}$.
 
Let
\begin{itemize}
  \item $(\mathcal{D}_{2.1})$ the straight line having the equation $-(\alpha+\mu-\beta)+\beta(r-1) z_{2}-\beta z_{1}=0$ i.e
  \begin{equation*}
  z_{2}=\frac{1}{r-1}z_{1}+\frac{\alpha+\mu-\beta}{\beta(r-1)},
\end{equation*} 
\item $(\mathcal{H}_{2})$ the curve having the equation $\alpha z_{1}-\mu z_{2}-r\beta z_{1}z_{2}=0$ i.e
\begin{equation*}
 z_{2}=\frac{\alpha z_{1}}{r\beta z_{1}+\mu},
\end{equation*}
\end{itemize}

 In order to obtain a parametrization of the boundary $\widetilde{\partial O}$ on the interval [0, 2], we make the change of variable $u=t/(1+t)$ and the $ODE$ \eqref{ODES0IS1} can be re-written
\begin{equation}\label{ODES0IS12}
\begin{cases}
      \frac{dy_1}{du}(u)=\frac{1}{(1-u)^2}[(\beta-\mu) y_1(u)+\beta(r-1) y_1(u) y_2(u)-\beta y_1^{2}(u)] \\
      \frac{dy_2}{du}(u)=\frac{1}{(1-u)^2}[\alpha y_1(u)-\mu y_2(u)-r\beta y_1(u) y_2(u)].
\end{cases}
\end{equation}

\begin{remarks}
\begin{itemize}
  \item The part of boundary $\widetilde{\partial O}$ which starts from the point $B=(d,0)$ (intersection point between $\widetilde{\partial O}$ and the horizontal axis) to the unstable endemic equilibrium $\tilde{z}$ of the dynamical system \eqref{ODES0IS1} has as parametrization the solution $q^{1}(u)=(q^{1}_{1}(u),q^{1}_{2}(u))_{0\leq t<1}$ of \eqref{ODES0IS12}  with the initial condition $B=(d,0)$. 
  \item The part of boundary $\widetilde{\partial O}$  which starts from the unstable endemic equilibrium $\tilde{z}$ of dynamical system \eqref{ODES0IS1} to the point $F$ (different to (0,1)) is parametrized by the solution $q^{2}(u)=(q^{2}_{1}(u),q^{2}_{2}(u))_{0\leq u<1}$ of the $OD$E defined by \eqref{ODES0IS12}
 with initial condition the point $F$ (see Figure \ref{FigS0IS1}). \\
Thus the characteristic boundary admits as parametric curve $(q(u))_{0\leq u\leq 2}$ defined by
 \begin{equation*}
 q(u)=
\begin{cases}
      q^{1}(u) \quad \text{if}\quad 0\leq u<1\\
      \tilde{z} \quad \text{if}\quad u=1 \\
      q^{2}(2-u) \quad\text{if} \quad 1<u\leq 2.
\end{cases}
\end{equation*} 

  \item As the tangent to the boundary $\widetilde{\partial O}$ at the point $F$ is almost vertical, there exist a ball $B_{R_{2}}(F)$ and a constant $\omega>1$ such that for all $(q_{1}(u),q_{2}(u))\in \widetilde{\partial O}\cap B_{R_{2}}(F)$, 
    \begin{equation*}
  \frac{\dot{q}_{2}(u)}{\dot{q}_{1}(u)}<-\omega.
    \end{equation*}
  We then defined by $(\mathcal{D}_{2.2})$  the line having for equation $z_{2}=-\omega z_{1}+1$.
    
  \item The tangent to the boundary $\widetilde{\partial O}$ at the point $B=(d,0)$ is a line whose slope is bounded as follows
\begin{equation*}
-\frac{\alpha}{\alpha+\mu-\beta}<\frac{\dot{q}_{2}}{\dot{q}_{1}}.
\end{equation*} 
As $\widetilde{\partial O}$ is a continuous curve, there exists a ball $B_{R_{3}}(B)$ such that for all $(q_{1}(u),q_{2}(u))\in \widetilde{\partial O}\cap B_{R_{3}}(B)$, 
\begin{equation*}
-\frac{\alpha}{\alpha+\mu-\beta}<\frac{\dot{q}_{2}}{\dot{q}_{1}}.
\end{equation*} 
We also defined by $(\mathcal{D}_{2.3})$  the line having for equation $z_{2}=-\frac{\alpha}{\alpha+\mu-\beta } z_{1}+\frac{\alpha d}{\alpha+\mu-\beta }$.
\end{itemize}
\end{remarks}

 From these remarks we deduce that for all $(q_{1}(u),q_{2}(u))\in\widetilde{\partial O}$, $\dot{q}_{1}(u)\leq0$ and $\dot{q}_{2}(u)\geq0$ since the first part of the  boundary $\widetilde{\partial O}$ is below of $(\mathcal{D}_{2.1})$ and $\mathcal{H}_{2}$ and the second part is above of $(\mathcal{D}_{2.1})$ and $\mathcal{H}_{2}$. 

 Now we choose the point $z_{0}\in \mathring{O}$ such that the following conditions are satisfied
\begin{itemize}
  \item $z_{0}$ is the below of $(\mathcal{D}_{2.1})$ and $(\mathcal{H}_{2})$,
  \item $z_{0}$ is on right of $(\mathcal{D}_{2.2})$ and $(\mathcal{D}_{2.3})$,
  \item $z_{0}$ is a point of the horizontal line passing through $G$ (intersection point between the vertical line passing through $B$ and the line whose equation is $z_1+z_2=1$), 
  \item the orthogonal projection of $z_{0}$ on the horizontal axis is at a distance smaller than $R_{3}$ from point $B$,
  \item the projection  of $z_{0}$ on the vertical axis and parallel to the line whose equation is $z_1+z_2=1$ is at a distance smaller than $R_{2}$ from point $F$.
\end{itemize}

 We rewrite the ODE \eqref{ODES0IS1} as $\dot{z}(t)=g(z(t))$. We want to verify that for any $z\in\widetilde{\partial O}$ with $z_2<\tilde{z}_2$,
the vector $g(z)$ points in the sector $(3\pi/4,\pi)$. The reader can then easily verify that the same is true for $-g(z)$ if
$z\in\widetilde{\partial O}$ while $z_2>\tilde{z}_2$.

 The fact that $g(z)$ points in the north--west direction follows from $g_1(z)<0$, $g_2(z)>0$. It thus remains to prove the
\begin{lemma}
For any $z\in\widetilde{\partial O}$ with $z_2<\tilde{z}_2$, $g_1(z)+g_2(z)>0$.
\end{lemma}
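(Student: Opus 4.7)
The plan is to work with the single scalar function $h(z) := g_1(z)+g_2(z) = \beta z_1(1-z_1-z_2)-\mu(z_1+z_2)$. Since $\widetilde{\partial O}_1$ is itself a forward trajectory of $\dot z=g(z)$, I parametrize it by ODE time $t\in[0,\infty)$ with $z(0)=B$ and $z(t)\to \tilde z$; the claim then reduces to showing $h(z(t))>0$ for every finite $t\ge 0$.

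The first ingredient is an identity at the zeros of $h$ along the trajectory. On $\widetilde{\partial O}_1$ the flow remains in the open simplex: the paper already notes that this part of the boundary lies strictly below $\mathcal{H}_2$, so $g_2(z(t))>0$; and on the hypotenuse $z_1+z_2=1$ one has $g_1+g_2=-\mu<0$, so $z_3:=1-z_1-z_2>0$ is preserved along the trajectory. A direct computation gives $\dot h=g_1[\beta(1-2z_1-z_2)-\mu]+g_2[-\beta z_1-\mu]$, and substituting $g_1=-g_2$ (valid at any zero of $h$) simplifies this to
\[
\dot h\,\big|_{h=0}=-\beta\,z_3\,g_2<0\quad \text{along }\widetilde{\partial O}_1,
\]
so $h(z(\cdot))$ is strictly decreasing at any of its zeros.

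The second ingredient is that $h(z(t))>0$ for all $t$ large, proved by linearizing at $\tilde z$. Using $g_1(\tilde z)=g_2(\tilde z)=0$, the Jacobian simplifies to
\[
J(\tilde z)=\begin{pmatrix}-\beta\tilde z_1 & (r-1)\beta\tilde z_1\\ \mu\tilde z_2/\tilde z_1 & -\mu-r\beta\tilde z_1\end{pmatrix},\qquad \mathrm{tr}\,J(\tilde z)=-\mu-(r+1)\beta\tilde z_1.
\]
Since $\tilde z$ is a saddle, $\lambda_s<0<\lambda_u$ with $|\lambda_s|=\lambda_u+|\mathrm{tr}\,J|>(r+1)\beta\tilde z_1$. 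Reading off the first row of $J(\tilde z)-\lambda_sI$ gives $v_{s,2}/v_{s,1}=(\beta\tilde z_1+\lambda_s)/((r-1)\beta\tilde z_1)<-r/(r-1)<-1$; combined with the orientation $z_1(t)>\tilde z_1$, $z_2(t)<\tilde z_2$ on $\widetilde{\partial O}_1$, this forces the leading expansion $z(t)-\tilde z\sim c\,e^{\lambda_s t}v_s$ with $c, v_{s,1}>0$, $v_{s,2}<0$, and hence $v_{s,1}+v_{s,2}<0$. Using the handy identity $\nabla h(\tilde z)=(1,1)\,J(\tilde z)$ (immediate since $h=g_1+g_2$), we obtain
\[
h(z(t))\sim \nabla h(\tilde z)\cdot(z(t)-\tilde z)=\lambda_s(v_{s,1}+v_{s,2})\,c\,e^{\lambda_s t}>0
\]
for all $t$ sufficiently large.

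These two facts combine by the standard last-zero argument: if $h(z(\cdot))\le 0$ anywhere on $[0,\infty)$, let $t^\ast := \sup\{t\ge 0 : h(z(t))\le 0\}$; by the asymptotic step $t^\ast$ is finite, by continuity $h(z(t^\ast))=0$, and by the first step $\dot h(z(t^\ast))<0$, so $h(z(t))<0$ for $t$ slightly larger than $t^\ast$, contradicting the definition of $t^\ast$. The same argument applied at $t=0$ rules out $h(B)=0$ (using $z_3(B)=1-d>0$ and $g_2(B)=\alpha d>0$). I expect the linearization step to be the main technical hurdle, because extracting the strict comparison $|\lambda_s|>r\beta\tilde z_1$ (equivalently $v_{s,1}+v_{s,2}<0$) requires fully exploiting the equilibrium identities at $\tilde z$ to reduce the Jacobian and then combining with the saddle condition; the identity for $\dot h|_{h=0}$ and the forward invariance of the open simplex are short routine computations.
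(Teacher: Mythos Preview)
Your proof is correct and shares the same scalar reduction as the paper: both work with $h=\dot\xi=g_1+g_2$ and both establish that $h$ cannot change sign from nonpositive to positive along the trajectory (the paper via the variation-of-constants identity $\frac{d}{dt}\dot\xi=-(\mu+\beta z_1)\dot\xi+\beta(1-\xi)\dot z_1$, which at $\dot\xi=0$ reduces exactly to your $\dot h|_{h=0}=-\beta z_3 g_2<0$).

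Where you genuinely diverge is in showing $h(z(t))>0$ for large $t$. The paper argues \emph{geometrically}: it checks that on the half-line $\{(\tilde z_1+a,\tilde z_2-a):a>0\}$ one has $\dot\xi=(\alpha+\mu-r\beta\tilde z_2)a>0$, then uses a no-crossing argument in the phase plane to conclude that $\widetilde{\partial O}_1$ stays entirely in $\{\xi<\tilde z_1+\tilde z_2\}$; since $\xi(t)\to\tilde z_1+\tilde z_2$ from below, $\dot\xi$ must be positive at arbitrarily large times, and the sign-trap does the rest. You instead \emph{linearize}: you compute $J(\tilde z)$ explicitly, use the saddle condition and the trace to get $|\lambda_s|>\mu+(r+1)\beta\tilde z_1$, read off the stable eigenvector ratio $v_{s,2}/v_{s,1}<-r/(r-1)<-1$, and then exploit the identity $\nabla h(\tilde z)=(1,1)J(\tilde z)$ to obtain $h(z(t))\sim c\lambda_s(v_{s,1}+v_{s,2})e^{\lambda_s t}>0$ near $\tilde z$. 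The paper's route avoids all eigenvalue computation but needs a separate phase-plane lemma (that $\widetilde{\partial O}_1$ lies below the anti-diagonal through $\tilde z$); your route is more computational but completely local at $\tilde z$ and does not rely on any global picture of the basin. Both buy the same conclusion at comparable cost.

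Two small points you could tighten: the orientation $z_1(t)>\tilde z_1$ on $\widetilde{\partial O}_1$ (which you assert) follows from the paper's observation that $g_1<0$ there, so $z_1$ decreases monotonically to $\tilde z_1$; and at a putative zero of $h$ the strictness $g_2>0$ is automatic, since $g_2=0$ together with $g_1=-g_2=0$ would force an equilibrium, impossible on $\widetilde{\partial O}_1\setminus\{\tilde z\}$.
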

\noindent{\sc Proof} 
$\{z(t),\ t\ge0\}$ denoting any trajectory of the ODE  \eqref{ODES0IS1}, we define $\xi(t)=z_1(t)+z_2(t)$. It is easy to verify that
\[\frac{d}{dt}\dot{\xi}(t)=-(\mu+\beta z_1(t))\dot{\xi}(t)+\beta(1-\xi(t))\dot{z}_1(t),\]
hence for any $0\le t_0<t$,
\[\dot{\xi}(t)=\dot{\xi}(t_0)\exp\left(-\int_0^t(\mu+\beta z_1(s))ds\right)+\beta\int_0^t\exp\left(-\int_s^t(\mu+\beta z_1(r))dr\right)(1-\xi(s))\dot{z}_1(s)ds,\]
and since $g_1(z)<0$ if $z\in\widetilde{\partial O}$ with $z_2<\tilde{z}_2$, we have that along the trajectory of  \eqref{ODES0IS1} from $B$ to $\tilde{z}$, 
if $\dot{\xi}(t_0)\le0$, then $\dot{\xi}(t)<0$ for all $t>t_0$.

 Now, at a point $z=(\tilde{z}_1+a,\tilde{z}_2-a)$ for $a\in\mathbb{R}$, we have that
\[\dot{\xi}(t)=(\alpha+\mu-r\beta \tilde{z}_2)a.\]
At the point $(0,\tilde{z}_1+\tilde{z}_2)$ (that is with $a=-\tilde{z}_1$), we note that $\dot{z}_1(t)=0$ and $\dot{z}_2(t)=-\mu z_2<0$, 
which implies that $(\alpha+\mu-r\beta \tilde{z}_2)>0$. Consequently at any point $z$ on the half line $\{(\tilde{z}_1+a,\tilde{z}_2-a),\ a>0\}$,
$\dot{\xi}(t)>0$. 

 We first conclude from the above two facts that the trajectory of  \eqref{ODES0IS1} from $B$ to $\tilde{z}$ lies entirely below the half line
$\{(\tilde{z}_1+a,\tilde{z}_2-a),\ a>0\}$. Indeed, if that would not be the case, there would be points above that half--line which would be 
on the left of $\widetilde{\partial O}$, hence a trajectory of  \eqref{ODES0IS1} starting from such a point would eventually converge to $(0,0)$, hence cross downward the half line $\{(\tilde{z}_1+a,\tilde{z}_2-a),\ a>0\}$, which is impossible.

 Since the trajectory of  \eqref{ODES0IS1} from $B$ to $\tilde{z}$ lies entirely below the half line
$\{(\tilde{z}_1+a,\tilde{z}_2-a),\ a>0\}$, necessarily for any $t_0>0$ there exists $t>t_0$ such that $\dot{\xi}(t)>0$, which, as a consequence of the first statement in the present proof, implies that $\dot{\xi}(t)>0$ for any $t>0$, hence the result.

 We now deduce the assumption \ref{assumpinq} \ref{assump21} and  \ref{assumpinq} \ref{assump22} through the proof of the following Lemma. 

\begin{lemma}
The assumption \ref{assumpinq} \ref{assump21} is satisfied and there exist $\theta\in]0,\pi[$ such that for all $y\in\partial O$
\begin{equation*}
\dist(y^{a}, \partial O)\geq a\times\sin(\theta)\inf_{v\in \partial O} |v-z_{0}|,
\end{equation*}
where $y^{a}=y+a(z_0-y)$ and $z_0$ is chosen above.
\end{lemma}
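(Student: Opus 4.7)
The plan is to mirror the proof carried out for the $SIV$ model: the only part of $\partial O$ where the inequality could fail is the characteristic boundary $\widetilde{\partial O}$, since elsewhere $\partial O$ lies on straight segments of the simplex $A$ on which $z_0$ sits safely in the interior side. I would partition $\widetilde{\partial O}$ into three sub-arcs delimited by the balls $B_{R_3}(B)$ (near the horizontal-axis endpoint $B$) and $B_{R_2}(F)$ (near the upper endpoint $F$), with a middle arc bounded away from both. On each sub-arc, the goal is to show that the angle $\theta(y)$ between the segment $[z_0,y]$ and the tangent line to $\widetilde{\partial O}$ at $y$ admits a uniform lower bound in $]0,\pi[$.

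On the arc $\widetilde{\partial O}\cap B_{R_3}(B)$, the slope constraint from the remark on the tangent at $B$ gives that the local tangent direction lies above the line $(\mathcal{D}_{2.3})$, while the chosen position of $z_0$ (on the right of $(\mathcal{D}_{2.3})$, at horizontal distance less than $R_3$ from $B$) forces the chord $[z_0,y]$ to meet $y$ from a direction bounded strictly away from this tangent direction. The symmetric argument applies on $\widetilde{\partial O}\cap B_{R_2}(F)$ using $(\mathcal{D}_{2.2})$ and the fact that the projection of $z_0$ parallel to the line $z_1+z_2=1$ lies within distance $R_2$ of $F$. On the middle arc, the monotonicity $\dot{q}_1\le0$, $\dot{q}_2\ge0$ established above via $(\mathcal{H}_2)$ and $(\mathcal{D}_{2.1})$ confines the tangent slope to a compact sub-interval that is bounded away from the slopes of chords from $z_0$; compactness then yields a uniform angle bound there as well.

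With $\theta\in]0,\pi[$ the minimum of these finitely many sub-arc bounds, elementary trigonometry in the triangle with vertices $z_0$, $y$, and the foot of the perpendicular from $y^a$ onto the tangent at $y$ gives
\begin{equation*}
\dist(y^a,\widetilde{\partial O})\;\geq\;\sin(\theta)\,|y^a-y|\;=\;a\,\sin(\theta)\,|z_0-y|\;\geq\;a\,\sin(\theta)\,\inf_{v\in\widetilde{\partial O}}|v-z_0|,
\end{equation*}
and the last infimum is strictly positive by compactness of $\widetilde{\partial O}$ together with $z_0\notin\widetilde{\partial O}$. This positive-distance estimate also yields Assumption \ref{assumpinq} \ref{assump21}, since a segment from $z_0$ re-touching $\partial O$ at an interior point would contradict the bound at that intermediate point; and the upper estimate in \ref{assumpinq} \ref{assump22} follows immediately from $|y-y^a|=a|z_0-y|\le a\,\mathrm{diam}(\bar O)$.

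The main obstacle is the behavior near the saddle $\tilde z$ where the two stable-manifold arcs meet: a priori the tangent direction there could coincide with the direction of the chord $[z_0,\tilde z]$, producing a vanishing angle. This is ruled out by the $C^\infty$ regularity of $\widetilde{\partial O}$ at $\tilde z$ (Theorem 1.3.1 in \cite{guckenheimer2013nonlinear}), which gives a unique tangent along the stable eigendirection of $\tilde z$; combined with the strategic placement of $z_0$ strictly below $(\mathcal{D}_{2.1})$ and $(\mathcal{H}_2)$ and on the correct side of $(\mathcal{D}_{2.2})$, $(\mathcal{D}_{2.3})$, this forces the chord slope from $z_0$ to $\tilde z$ to differ from the stable eigendirection and keeps the angle $\theta(\tilde z)$ uniformly positive.
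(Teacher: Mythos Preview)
Your overall strategy matches the paper's: reduce to $\widetilde{\partial O}$, partition it into sub-arcs, obtain a uniform lower bound on an angle at each $y$, and conclude via $|y^a-y|=a|z_0-y|$. The paper actually uses \emph{four} sub-arcs (from $B$ to $C$, $C$ to $D$, $D$ to $E$, $E$ to $F$) rather than three, but that is a minor difference of bookkeeping.

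There is, however, a genuine gap in your key geometric step. You bound the angle between the chord $[z_0,y]$ and the \emph{tangent line} at $y$, and then write $\dist(y^a,\widetilde{\partial O})\ge \sin(\theta)\,|y^a-y|$ by dropping a perpendicular to that tangent. But the distance from $y^a$ to the tangent at $y$ is not the distance to $\widetilde{\partial O}$: the curve may bend toward $y^a$ and come closer than the tangent does. What the paper does instead is, on each sub-arc, introduce \emph{two} fixed reference lines through $y$ (e.g.\ the horizontal and the line parallel to $(\mathcal D_{2.3})$ near $B$; the vertical and the line parallel to $z_1+z_2=1$ on the middle arc; etc.) chosen so that the monotonicity $\dot q_1\le 0$, $\dot q_2\ge 0$ and the slope constraints near the endpoints force the \emph{entire} arc, not just its tangent, to lie in the closed cone between those two lines. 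One then records the two angles $\theta_1(y),\theta_2(y)$ that the chord $[z_0,y]$ makes with these two bounding lines and bounds each from below uniformly; the inequality $\dist(y^a,\widetilde{\partial O})\ge \min_i\sin(\theta_i(y))\,|y^a-y|$ is then an honest cone estimate. Your invocation of the monotonicity is the right ingredient, but you need to use it to trap the curve in a cone at $y$, not merely to bound the tangent slope.

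Your added remarks on the saddle $\tilde z$ and on deriving Assumption~\ref{assumpinq}\,\ref{assump21} from the distance bound are reasonable and go slightly beyond what the paper spells out.
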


\begin{proof}
We note here that it is enough to show this last inequality over $\widetilde{\partial O}$.
Now for $y\in\widetilde{\partial O}$, if $y\in\widetilde{\partial O}\cap B_{R_{3}}(B)$,  we define $\theta_{2}(y)$ as the angle made by the horizontal line passing through $y$ and the vector from $y$ to $z_{0}$ and $\theta_{1}(y)$ as the angle made by the vector from $y$ to $z_{0}$ and the parallel line to $(\mathcal{D}_{2.3})$  passing through $y$. For the part of $\widetilde{\partial O}$ from $C$ to $D$, $\theta_{2}(y)$ is the angle made by the parallel line to the line whose equation is $z_1+z_2=1$ passing through $y$ and the vector from $y$ to $z$. The angle $\theta_{1}(y)$ is made by the vector from $y$ to $z$ and the vertical line passing through $y$. For the part of $\widetilde{\partial O}$ from $D$ to $E$, $\theta_{1}(y)$  is made by the vector from $y$ to $z_{0}$ and the horizontal line passing through $y$ and $\theta_{2}(y)$ is the angle between the parallel line to $(\mathcal{D}_{2.2})$ and the vector from $y$ to $z_{0}$. In the last part of $\widetilde{\partial O}$ i.e from $E$ to $F$, $\theta_{2}(y)$ is made as in the part from $D$ to $E$ and $\theta_{1}(y)$ is the angle between the vector from $y$ to $z_{0}$ and the parallel line to the second bisector passing through $y$. It is easy to see in each part of $\widetilde{\partial O}$ that there exists $\theta\in]0,\theta[$ such that $\sin(\theta_{1}(y)), \sin(\theta_{2}(y))\geq\sin(\theta)$. And then  there exists $\theta\in]0,\pi[$  such that 
\begin{align*}
\dist(y^{a}, \widetilde{\partial O})&\geq \min_{i=1,2}\sin(\theta_{i}(y))\times |y^{a}-y| \\
&=a\times \min_{i=1,2}\sin(\theta_{i}(y))\times |y-z_{0}|\\
&\geq a\times\sin(\theta)\inf_{v\in \widetilde{\partial O}} |v-z_{0}|.
\end{align*} 
\end{proof}

\frenchspacing
\bibliographystyle{plain}

\end{document}